\begin{document}
\parskip=6pt
\newtheorem{prop}{Proposition}
\numberwithin{prop}{section}
\newtheorem{thm}{Theorem}
\numberwithin{thm}{section}
\newtheorem{corr}{Corollary}
\numberwithin{corr}{section}
\newtheorem{lemma}{Lemma}
\newtheorem{defn}{Definition}
\numberwithin{defn}{section}
\numberwithin{lemma}{section}
\numberwithin{equation}{section}
\newtheorem{proof2}{Proof of Lemma 2.2}
\newtheorem{proof3}{Proof of Theorem 3.1}
\newtheorem{proof5}{Proof of Lemma 5.2}
\newtheorem{proof4}{Proof of Theorem 1.4}
\newcommand{\cF}{\cal F}
\newcommand{\cA}{\cal A}
\newcommand{\cC}{\cal C}
\newcommand{\cO}{\cal O}
\newcommand{\var}{\varepsilon}
\newcommand{\bC}{\mathbb C}
\newcommand{\bP}{\mathbb P}
\newcommand{\bN}{\mathbb N}
\newcommand{\bA}{\mathbb A}
\newcommand{\bR}{\mathbb R}
\newcommand{\fU}{\frak U}
\newcommand{\hol}{\text{hol}}
\newcommand{\Hom}{\text{Hom}}
\newcommand{\id}{\text{id}}
\newcommand{\Ker}{\text{Ker}\,}
\newcommand{\im}{\text{Im}\,}
\renewcommand\qed{ }
\begin{titlepage}
\title{\bf Representing Analytic Cohomology Groups of Complex Manifolds\thanks{Research supported in part by the NSF grant DMS-1162070\newline
2010 Mathematics Subject Classification 32E10, 32L10, 46G20}}
\author{L\'aszl\'o Lempert\\ Department of  Mathematics\\
Purdue University\\West Lafayette, IN
47907-2067, USA}
\end{titlepage}
\date{}
\maketitle
\abstract
Consider a holomorphic vector bundle $L\to X$ and an open cover $\fU=\{U_a\colon a\in A\}$ of $X$, parametrized by a complex manifold $A$.
We prove that the sheaf cohomology groups $H^q(X,L)$ can be computed from the complex $C^{\bullet}_{\hol}$ $(\fU,L)$ of cochains 
$(f_{a_0\ldots a_q})_{a_0,\ldots, a_q\in A}$ that depend holomorphically on the $a_j$, provided $S=\{(a,x)\in A\times X\colon x\in U_a\}$ is a Stein open subset of $A\times X$.
The result is proved in the setting of Banach manifolds, and is applied to study representations on cohomology groups induced by a holomorphic action of a complex reductive Lie group on $L$.
\endabstract

\section{Introduction}

Consider a holomorphic vector bundle $L\to X$.
Its cohomology groups $H^q(X,L)$ are often represented in terms of open covers $\fU=\{U_a\colon a\in A\}$ of $X$ and the associated \v Cech complex $C^{\bullet}(\fU,L)$, whose elements are collections $(f_{a_0\ldots a_q})_{a_j\in A}$, with each $f_{a_0\ldots a_q}\in\Gamma (\bigcap^q_{j=1} U_{a_j},L)$ a holomorphic section of $L$.
If each $U_a$ is Stein, by Cartan's Theorem B and by Leray's theorem $H^q(X,L)\approx H^q(C^{\bullet}(\fU,L))$, see \cite{Ca,B}.

The notion of \v Cech cochains $(f_{a_0\ldots a_q})$ is very natural if the cover $\fU$ is indexed by a set $A$ without any structure.
However, as noted in \cite{G,BEG1,BEG2,BE,EGW}, if $A$ has some structure, then it makes sense to consider cochains that, in their dependence on $a_j$, reflect this structure.
For example, if $A$ is a differential or complex manifold, or a measure space, one can work with the subspaces $C^{\bullet}_{\text{smooth}}(\fU,L)$, $C^{\bullet}_{\hol}(\fU,L)$ or $C^{\bullet}_{\text{meas}}(\fU,L)$ of cochains $(f_{a_0\ldots a_q})$ that depend smoothly, holomorphically, or measurably on $a_0,\ldots,a_q$.
In this paper we prove that under a certain condition the holomorphic \v Cech complex $C^{\bullet}_{\hol}(\fU,L)$ and $C^{\bullet}(\fU,L)$ have isomorphic cohomology groups.

\begin{thm}Let $A,X$ be complex manifolds, $L\to X$ a holomorphic vector bundle, and $\fU=\{U_a\colon a\in A\}$ an open cover of $X$.
If
$$
S=\{(a,x)\in A\times X\colon x\in U_a\}\subset A\times X
$$
is a Stein open subset, then inclusion $C^{\bullet}_{\hol}(\fU,L)\subset C^{\bullet}(\fU,L)$ induces an isomorphism of cohomology groups.
\end{thm}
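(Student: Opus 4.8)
The plan is to realize $C^\bullet_{\hol}(\fU,L)$ as the complex of global sections of an acyclic resolution of $L$ on $X$, and then to match that resolution with the sheaf complex underlying the ordinary \v Cech complex. To fix notation, let $\rho\colon S\to X$ be the projection $(a,x)\mapsto x$, a surjective holomorphic map with holomorphic local sections $x\mapsto(a,x)$ over each $U_a$. For $q\ge 0$ put
$$S^{[q]}=\{(a_0,\dots,a_q,x)\in A^{q+1}\times X\colon x\in U_{a_j}\ \text{for}\ 0\le j\le q\},$$
the fibre product of $q+1$ copies of $S$ over $X$, with face maps $\partial_i\colon S^{[q]}\to S^{[q-1]}$ deleting $a_i$ and common projection $\rho_q\colon S^{[q]}\to X$. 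A holomorphic $q$-cochain $(f_{a_0\dots a_q})$ is precisely a holomorphic section of $\rho_q^*L$ over $S^{[q]}$, with \v Cech differential $\sum_i(-1)^i\partial_i^*$; and the inclusion $C^q_{\hol}(\fU,L)\hookrightarrow C^q(\fU,L)$ is restriction of such a section to the slices $\{(a_0,\dots,a_q)\}\times X$ of $A^{q+1}\times X$, whose traces on $S^{[q]}$ are copies of $U_{a_0}\cap\dots\cap U_{a_q}$.

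The decisive geometric input — essentially the only place the hypothesis is used — is that $\rho_q^{-1}(V)=S^{[q]}\cap(A^{q+1}\times V)$ is Stein for every Stein open $V\subset X$. This rests on two embeddings. First, $(a_0,\dots,a_q,x)\mapsto\bigl((a_0,x),\dots,(a_q,x)\bigr)$ identifies $S^{[q]}$ with a closed complex submanifold of $S^{q+1}$ (the tuples with equal $X$-coordinates), and $S^{q+1}$ is Stein, so $S^{[q]}$ is Stein. Second, the graph map $s\mapsto\bigl(s,\rho_q(s)\bigr)$ identifies $\rho_q^{-1}(V)$ with a closed submanifold of $S^{[q]}\times V$, which is Stein as soon as $V$ is. The same device shows that each $U_{a_0}\cap\dots\cap U_{a_q}$, a slice of $S^{[q]}$, is a closed submanifold of the Stein manifold $S^{[q]}$ and hence Stein; in particular $\fU$ is a Leray cover.

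Granting this, the rest is formal. Let $\mathcal L^q:=\rho_{q*}\,\cO(\rho_q^*L)$, the sheaf on $X$ with $\mathcal L^q(V)=C^q_{\hol}(\fU|_V,L|_V)$, carrying the \v Cech differential. I would first check that $0\to L\to\mathcal L^\bullet$ is a resolution: over any open $V$ contained in some $U_b$ — and such $V$ form a basis of $X$ — the section $x\mapsto(b,x)$ of $\rho$ furnishes the standard contracting homotopy $(f_{a_0\dots a_{q-1}})\mapsto(f_{b\,a_0\dots a_{q-1}})$ of the augmented complex. Next, by the Steinness just proved and Cartan's Theorem B applied on $\rho_q^{-1}(V)$, the presheaf $V\mapsto H^i\bigl(\rho_q^{-1}(V),\rho_q^*L\bigr)$ vanishes for $i\ge1$ on a basis of $X$, so $R^i\rho_{q*}\cO(\rho_q^*L)=0$ for $i\ge1$; the Leray spectral sequence of $\rho_q$ then gives $H^p(X,\mathcal L^q)\cong H^p\bigl(S^{[q]},\rho_q^*L\bigr)$, which is $C^q_{\hol}(\fU,L)$ for $p=0$ and $0$ for $p\ge1$ since $S^{[q]}$ is Stein. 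Thus $\mathcal L^\bullet$ is an acyclic resolution of $L$, and $H^n\bigl(C^\bullet_{\hol}(\fU,L)\bigr)=H^n\bigl(\Gamma(X,\mathcal L^\bullet)\bigr)=H^n(X,L)$.

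To see that it is the inclusion that induces this isomorphism, I would run the same bookkeeping with the classical \v Cech sheaf complex $\mathcal C^\bullet$, where $\mathcal C^q(V)=\prod_{a_0,\dots,a_q}\Gamma\bigl(V\cap U_{a_0}\cap\dots\cap U_{a_q},L\bigr)$: it resolves $L$ and is $\Gamma$-acyclic because all the $U_{a_0}\cap\dots\cap U_{a_q}$ are Stein, so $H^n\bigl(C^\bullet(\fU,L)\bigr)=H^n(X,L)$ as well. Restriction to slices is a morphism of resolutions $\mathcal L^\bullet\to\mathcal C^\bullet$ over $\id_L$, hence a quasi-isomorphism, and on global sections it is exactly the inclusion $C^\bullet_{\hol}(\fU,L)\hookrightarrow C^\bullet(\fU,L)$; therefore that inclusion induces an isomorphism on cohomology. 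The only genuinely analytic step is the Steinness of $\rho_q^{-1}(V)$ — and of the finite intersections $U_{a_0}\cap\dots\cap U_{a_q}$ — so I expect that to be the point needing the most care, everything after it being the routine comparison of two acyclic resolutions of $L$.
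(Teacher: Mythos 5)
Your argument is a correct proof of Theorem 1.1 read in its classical, finite--dimensional sense, but it takes a genuinely different route from the paper's. You descend to sheaf theory on $X$: you exhibit $C^\bullet_{\hol}(\fU,L)$ and $C^\bullet(\fU,L)$ as the global sections of two resolutions of $L$ by $\Gamma$--acyclic sheaves, $\mathcal L^\bullet=\rho_{\bullet*}\mathcal O(\rho_\bullet^*L)$ and the classical $\mathcal C^\bullet$, linked by restriction to slices, and conclude by the uniqueness of derived functors; the Stein hypothesis enters through the acyclicity of $S^{[q]}$ and of $\rho_q^{-1}(V)$ (your two embedding tricks are exactly how the paper justifies, in Section 3, that $S^{[p+1]}$ is Stein). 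The paper never descends to $X$: it forms the double complex $K^{pq}=C^q(\pi_p^{-1}\fU,\pi_p^*L)\supset J^{pq}=C^q_{\hol}(\pi_p^{-1}\fU,\pi_p^*L)$ of cochains living on the Stein manifolds $S^{[p+1]}$ themselves, proves the rows exact by an explicit averaged homotopy (Proposition 4.1 --- the operator $\mu$ there is the symmetrized, global version of your contracting homotopy $f\mapsto f_{b\,a_0\dots a_{q-1}}$, with the tautological indices $b_0,\dots,b_p$ playing the role of your local section $x\mapsto(b,x)$ of $\rho$) and the columns exact by the exactness of \v Cech complexes of Stein covers of Stein manifolds (Proposition 4.2, quoting [L, Theorem 4.6]), and then runs the standard zig--zag on the total complex. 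What the paper's route buys is precisely what yours gives up: it works for Banach manifolds and Banach bundles, where $R^i\rho_{q*}$, the Leray spectral sequence, paracompactness arguments and the classical Theorem B are unavailable and must be replaced by the cohesive--sheaf machinery of [L], [LP]; and it tracks the locally convex topologies throughout, yielding the topological isomorphism asserted in Theorem 3.1, which a comparison of resolutions does not see. Your route buys a shorter and more conceptual argument in finite dimensions.

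One step of yours needs more care than you give it: the $\Gamma$--acyclicity of $\mathcal C^q=\prod_{\sigma}(\iota_\sigma)_*(L|_{U_\sigma})$, a product over the uncountable index set $A^{q+1}$. Infinite products of sheaves are not exact and do not commute with stalks, so ``each factor is acyclic, hence the product is'' is not automatic; this is the standard wrinkle in the resolution--based proof of Leray's theorem. It is repairable on a paracompact, finite--dimensional $X$ (compute the \v Cech cohomology of $\mathcal C^q$ over the cofinal family of Stein covers, where cochains of a product sheaf are products of cochains, or bypass $\mathcal C^\bullet$ entirely and compare both complexes through the double complex $C^p(\fU,\mathcal I^q)$ built from a flasque resolution), but as written it is an assertion rather than a proof. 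The exactness of $0\to L\to\mathcal C^\bullet$ itself is fine: your homotopy establishes it for sections over a basis of $X$, which suffices stalkwise.
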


Covers parametrized by complex manifolds occur in many situations. A natural Stein cover of projective space $\bP$ is by complements of hyperplanes. This cover is parametrized by the hyperplanes, i.e., by points of the dual projective space $\bP^*$. By restriction, we also obtain a Stein cover of any projective manifold $X\subset\bP$, parametrized by $\bP^*$. These covers satisfy the assumptions of Theorem 1.1.

The theorem is related to [EGW, Theorem 1.1], see also \cite{G}.
There it is assumed additionally that the sets $\{a\in A\colon x\in U_a\}$ are contractible, and the conclusion is that $H^q(\fU,L)$, or $H^q(X,L)$, is isomorphic to a certain relative holomorphic De Rham cohomology group.
As in \cite{EGW,G}, the motivation to represent cohomology groups through holomorphic objects comes from the study of bundles on which a complex reductive group acts holomorphically, see Theorem 5.5.

So far we have been vague about the sort of complex manifolds and vector bundles covered by Theorem 1.1.
In fact, while the theorem is new even for finite dimensional $L$, it  holds for a large class of Banach manifolds $A,X$ and Banach bundles $L\to X$; and the isomorphism in the theorem is that of topological vector spaces.
In Section 2 we will explain the necessary background in infinite dimensional complex geometry and in Sections 3 and 4 we formulate and prove the infinite dimensional version of Theorem 1.1 (Theorem 3.1).
Establishing a very special case of this theorem was the first step in \cite{LZ} of the computation of the first cohomology group of various loop spaces $L\bP_1$ of the Riemann sphere (in guise of the Dolbeault group $H^{0,1}(L\bP_1)$).
Finally, in Section 5 we apply Theorem 3.1 to the study of holomorphic group actions. We hope these results will pave the way to the computation of higher cohomology groups of loop spaces of projective spaces $\bP_n$ and more generally, of projective manifolds.

\section{Complex Banach manifolds}

In this section we recall basic notions of infinite dimensional complex geometry as well as key vanishing and isomorphism theorems.
The main references are \cite{D,L,LP,M}.

Let $E,F$ be Hausdorff, locally convex topological vector spaces over $\bC$, $F$ sequentially complete, and $\Omega\subset E$ open.
A function $f\colon\Omega\to F$ is holomorphic if at every $x\in\Omega$ the directional derivatives
$$
df(x;v)=\lim_{\bC\ni t\to 0}\ {f(x+tv)-f(x)\over t }
$$
exist, and define a continuous map $df\colon\Omega\times E=T\Omega\to F$.

A complex manifold in this paper will be a Hausdorff space sewn together from open subsets of Banach spaces with holomorphic sewing maps.
A closed subset $Y$ of a complex manifold $X$ is a direct submanifold if for every $x\in Y$ there are neighborhoods $U\subset X$, a Banach space $E$, a complemented subspace $F\subset E$, and a neighborhood $V\subset E$ of $0\in E$ such that the pair $(U,U\cap Y)$ is biholomorphic to $(V,V\cap F)$.
Such a $Y$ has a natural structure of a complex manifold.
A holomorphic Banach bundle is a holomorphic map $\pi\colon L\to X$ of complex manifolds, each fiber $L_x=\pi^{-1}(x)$ endowed with the structure of a complex vector space.
It is required that for each $x\in X$ there be a neighborhood $U\subset X$, a Banach space $E$, and a biholomorphism $L_U=\pi^{-1}U\to U\times E$ that for $y\in U$ maps the fiber $L_y$ linearly on $\{y\}\times E$.
We denote by $\Gamma (X,L)$ the vector space of holomorphic sections of $L$.

An open subset $\Omega$ of a Banach space $E$ is pseudoconvex if $\Omega\cap E'\subset E'$ is pseudoconvex for all finite dimensional subspaces $E'\subset E$.
A connected complex manifold is Stein if it is biholomorphic to a direct submanifold of a pseudoconvex subset $\Omega\subset E$, where $E$ is a Banach space with a Schauder basis.
A general complex manifold is Stein if its connected components are.

As we shall see, on Stein manifolds generalizations of Cartan's Theorems A and B hold.
More generally, we shall consider locally Stein manifolds $X$ in which every point has a Stein neighborhood.
This is equivalent to $X$ being modeled on complemented subspaces of Banach spaces with a Schauder basis.

The sheaf of germs of holomorphic sections of a holomorphic Banach bundle $L\to X$ is a {\sl cohesive} sheaf, (see [L, Definition 3.5]), whose theory was developed in \cite{LP,L}.
We now review this theory in the simpler context of bundles.

\begin{defn}A homomorphism $\varphi$ between holomorphic Banach bundles $L\to X$ and $L'\to X$ is a complete epimorphism if for every trivial bundle $T\to X$ and for every Stein open $U\subset X$ the induced map
$$
\varphi_*\colon\Gamma (U,\Hom(T,L))\to \Gamma(U,\Hom(T,L'))
$$
is surjective.
\end{defn}

\setcounter{lemma}{1}
\begin{lemma}If $\Lambda,L\to X$ are holomorphic Banach bundles over a Stein manifold $X$, then a homomorphism $\varphi\colon\Lambda\to L$ is a complete epimorphism if and only if there is a homomorphism $\psi\colon L\to\Lambda$ such that $\varphi\psi=\id_L$.
In this case $\psi(L)$, $\Ker \varphi\subset\Lambda$ are subbundles, and $\Lambda=\psi(L)\oplus\Ker \varphi$.
\end{lemma}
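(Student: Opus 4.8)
The plan is to treat the two implications separately and then read off the splitting. The implication ``$\Leftarrow$'' is immediate: if $\psi\colon L\to\Lambda$ satisfies $\varphi\psi=\id_L$, then for every trivial bundle $T\to X$ and every Stein open $U\subset X$ the assignment $h\mapsto\psi h$ is a right inverse of $\varphi_*$ on $\Gamma(U,\Hom(T,L))$, so $\varphi_*$ is onto and $\varphi$ is a complete epimorphism. The substance is therefore the reverse implication, which I would prove by first extracting local right inverses of $\varphi$ from the definition and then globalizing by a cohomology vanishing argument.

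For the local step, fix $x_0\in X$ and choose a connected Stein neighborhood $U\ni x_0$ trivializing both $L$ and $\Lambda$ (small coordinate balls are Stein), say $L|_U\cong U\times F$. Apply Definition 2.1 with the trivial bundle $T=X\times F$: the chosen trivialization is an isomorphism $\theta\in\Gamma(U,\Hom(T,L))$, hence lifts to some $\tilde\theta\in\Gamma(U,\Hom(T,\Lambda))$ with $\varphi\tilde\theta=\theta$, and $\psi_U:=\tilde\theta\,\theta^{-1}\in\Gamma(U,\Hom(L,\Lambda))$ satisfies $\varphi\psi_U=\id_{L|_U}$. Consequently $p_U:=\psi_U\varphi$ is an idempotent endomorphism of $\Lambda|_U$. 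Writing $p_U$ in a trivialization of $\Lambda$ as a holomorphic family $x\mapsto p(x)$ of bounded projections, the operators $u(x)=p(x)p(x_0)+(I-p(x))(I-p(x_0))$ are holomorphic, equal $I$ at $x_0$, hence invertible near $x_0$, and satisfy $p(x)=u(x)\,p(x_0)\,u(x)^{-1}$; thus $\Ker p(x)$ and $\im p(x)$ are holomorphically trivialized near $x_0$. Since $x_0$ is arbitrary and $\Ker\varphi$ agrees locally with $\Ker p_U$, this shows that $K:=\Ker\varphi$ is a complemented holomorphic subbundle of $\Lambda$, the local complements being $\im p_U$.

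To globalize, note that because $K$ is a subbundle and the $\psi_U$ exhibit local lifts $\Phi\mapsto\psi_U\Phi$ of $\Hom(L,L)$ into $\Hom(L,\Lambda)$, the sequence of sheaves of germs of holomorphic sections
$$0\longrightarrow\mathcal{H}om(L,K)\longrightarrow\mathcal{H}om(L,\Lambda)\xrightarrow{\ \varphi_*\ }\mathcal{H}om(L,L)\longrightarrow 0$$
is exact. Each term is the sheaf of sections of a holomorphic Banach bundle over the Stein manifold $X$, hence cohesive, so the generalization of Cartan's Theorem B gives $H^1(X,\mathcal{H}om(L,K))=0$, and the long exact cohomology sequence makes $\Gamma(X,\mathcal{H}om(L,\Lambda))\to\Gamma(X,\mathcal{H}om(L,L))$ surjective; pulling back $\id_L$ produces the desired global $\psi$ with $\varphi\psi=\id_L$. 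Finally, given any such $\psi$ (from either direction), $p=\psi\varphi$ is a global idempotent on $\Lambda$ with $\im p=\psi(L)$ and $\Ker p=\Ker\varphi$ by elementary algebra, $\Lambda=\im p\oplus\Ker p$, and the conjugacy argument above, now applied to $p$, shows $\psi(L)$ and $\Ker\varphi$ are subbundles. I expect the main obstacle to be this globalization step: one must make sure the auxiliary $\Hom$-sheaves really are cohesive so that Cartan's Theorem B in the form of \cite{L,LP} applies, and one must use that ``complete epimorphism'' --- rather than mere fiberwise surjectivity of $\varphi$, which need not split over Banach spaces --- is exactly the property needed to turn $\varphi_*$ into a sheaf surjection.
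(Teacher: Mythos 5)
Your proof is correct and follows essentially the same route as the paper: extract a right inverse of $\varphi$ from Definition 2.1 over trivializing Stein opens, show $\Ker\varphi$ is a locally complemented subbundle, and obtain the global $\psi$ from the short exact sequence $0\to\Hom(L,\Ker\varphi)\to\Hom(L,\Lambda)\to\Hom(L,L)\to 0$ together with the vanishing $H^1(X,\Hom(L,\Ker\varphi))=0$ supplied by Theorem 2.3. The only cosmetic difference is that the paper establishes the subbundle property by applying the inverse function theorem to the map $\vartheta(x,u,v)=(x,\Psi(x,u)+v)$ rather than by conjugating the holomorphic family of idempotents $p(x)$.
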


We shall derive the lemma from

\setcounter{thm}{2}
\begin{thm}If $L\to X$ is a holomorphic Banach bundle over a Stein manifold, then

(a)\ Some trivial Banach bundle has a complete epimorphism on $L$;

(b)\ $H^q(X,L)=0$ for $q\geq 1$.
\end{thm}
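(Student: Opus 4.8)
\emph{Plan of proof.}
Both assertions are the Banach‐bundle incarnation of Cartan's Theorems A and B, so the natural plan is to deduce them from the theory of cohesive sheaves of \cite{LP,L}. The first step is to observe that the sheaf $\cO_L$ of germs of holomorphic sections of $L$ is cohesive: by local triviality every point of $X$ has a neighborhood over which $\cO_L$ is isomorphic to the sheaf of sections of a trivial Banach bundle, and for such a sheaf the identity map serves as the local complete epimorphism demanded in the definition of a cohesive sheaf \cite{L}. Granting this, part (b) follows at once from the vanishing theorem for cohesive sheaves over Stein manifolds, since $H^q(X,\cO_L)$ computes $H^q(X,L)$.

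Should one wish to reprove the vanishing rather than cite it, I would argue as in \cite{LP}. First reduce to the case where $X$ is a pseudoconvex open subset $\Omega$ of a Banach space $F$ with a Schauder basis: for $X$ a direct submanifold of such an $\Omega$, the closed embedding $i\colon X\hookrightarrow\Omega$ sends $\cO_L$ to a cohesive sheaf $i_*\cO_L$ on $\Omega$ with $H^q(\Omega,i_*\cO_L)=H^q(X,\cO_L)$, and over the Stein manifold $\Omega$ this sheaf admits a resolution by sheaves of sections of \emph{trivial} Banach bundles $\Omega\times E$. It then suffices to show $H^q(\Omega,E)=0$ for $q\ge 1$ and $E$‑valued holomorphic functions, which one does by exhausting $\Omega$ with its finite‑dimensional slices $\Omega\cap F_n$ ($F_n$ the span of the first $n$ basis vectors), solving $\bar\partial$ with estimates on each slice, and passing to the limit via a Mittag‑Leffler argument on the inverse system of slices. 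This final limiting step — making the estimates along the exhaustion uniform enough for the approximating solutions to converge — is the technical heart of the matter and the step I expect to be the main obstacle; it is exactly what the $\bar\partial$‑theory of \cite{LP,L} supplies.

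For part (a) the plan is to globalize the tautological local epimorphisms. Pick a countable cover $X=\bigcup_i U_i$ by Stein open sets over which $L$ is trivial, $L|_{U_i}\cong U_i\times E_i$, and set $\mathbf E=\big(\bigoplus_i E_i\big)_{\ell^1}$; over $U_i$ the projection onto the $i$‑th summand is a split surjection $U_i\times\mathbf E\to L|_{U_i}$. Using the transition cocycle of $L$ and holomorphic functions supplied by the Steinness of $X$, assemble these into a single holomorphic bundle map $\varphi\colon X\times\mathbf E\to L$ — the $\ell^1$‑sum being chosen so that the relevant series converge — arranged so that $\varphi$ is fiberwise split and $\Ker\varphi$ is a complemented subbundle. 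Completeness of $\varphi$, i.e. surjectivity of $\varphi_*\colon\Gamma(U,\Hom(T,X\times\mathbf E))\to\Gamma(U,\Hom(T,L))$ for every trivial $T$ and Stein $U$, then follows from part (b): the obstruction to lifting a section along the exact sequence $0\to\Hom(T,\Ker\varphi)\to\Hom(T,X\times\mathbf E)\to\Hom(T,L)\to 0$ lies in $H^1(U,\Hom(T,\Ker\varphi))=0$. The delicate point is guaranteeing that $\Ker\varphi$ is genuinely a complemented subbundle — this forces the gluing to respect the local splittings, and is precisely the bookkeeping encoded in the notion of complete epimorphism and carried out in \cite{L}.
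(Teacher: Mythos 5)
Your proposal takes essentially the same route as the paper, which proves Theorem 2.3 simply by citing it as a special case of the cohesive-sheaf results [L, Theorem 3.7 and Lemma 3.8], themselves resting on \cite{LP,P}; your identification of $\cO_L$ as a cohesive sheaf and your appeal to the Stein vanishing theorem for such sheaves is exactly that reduction. Your optional sketches of the underlying machinery are broadly consistent with those references, with the one caveat that deducing completeness of $\varphi$ in (a) from the vanishing in (b) applied to the Banach bundle $\Hom(T,\Ker\varphi)$ inverts the logical order of [L], where resolutions by trivial bundles are established first (from vanishing for \emph{trivial} coefficients via \cite{P}) and vanishing for general bundles comes after --- but since you defer that technical heart to the references, as the paper itself does, this does not constitute a gap relative to the paper's own proof.
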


This is a special case of the sheaf theoretic [L, Theorem 3.7 and Lemma 3.8], which, in turn, depended on \cite{LP,P}.
Accepting Lemma 2.2, part (a) above means there is a holomorphic Banach bundle $L'\to X$ such that $L\oplus L'$ is trivial.

\begin{proof}[ Proof of Lemma 2.2]
The ``if'' part being obvious, we only prove the ``only if'' direction.
Suppose first that $\Lambda=X\times E\to X$ and $L=X\times F\to X$ are trivial.
We choose $\psi\in\Gamma (X,\Hom(L,\Lambda))$ to have image $\id_L\in\Gamma(X,\Hom(L,L))$ under the surjective map 
$$
\varphi_*\colon \Gamma (X,\Hom(L,\Lambda))\to\Gamma(X,\Hom(L,L)).
$$
This $\psi$ determines a homomorphism $L\to\Lambda$, also denoted $\psi$; clearly $\varphi\psi=\id_L$.
This implies
\begin{equation}
\Lambda_x=\psi(L_x)\oplus \Ker \varphi(x)\text{ for }x\in X.
\end{equation}

Next we show $\psi(L)$ and $\Ker\varphi$ are subbundles of $\Lambda$.
Fix $x_0\in X$, let $K=\Ker\varphi(x_0)\subset E$, write
$$
\psi(x,u)=(x,\Psi(x,u)),\qquad x\in X, u\in F,\Psi(x,u)\in E,
$$
and define a homomorphism of trivial bundles
$$
\vartheta\colon X\times (F\oplus K)\ni (x,u,v)\mapsto (x,\Psi(x,u)+v)\in X\times E.
$$
The inverse function theorem implies $\vartheta$ is an isomorphism of Banach bundles over a neighborhood of $x_0$, and it follows from (2.1) that
$$
\psi(L)=\vartheta(X\times (F\oplus 0))\text{ and }\Ker\varphi=\vartheta (X\times (0\oplus K))
$$
are complementary subbundles of $\Lambda$, near $x_0$.
Since $x_0$ was arbitrary, this holds over all of $X$.

To complete the proof, consider general Banach bundles $\Lambda,L$.
Since locally $\Lambda,L$ are trivial, $\Ker\varphi\subset\Lambda$ is a subbundle that is locally complemented, whence $\varphi$ gives rise to an exact sequence
$$
0\to\Hom (L,\Ker\varphi)\to \Hom (L,\Lambda)\to\Hom (L,L)\to 0
$$
of Banach bundles.
As $H^1(X,\Hom (L,\Ker\varphi))=0$ by Theorem 2.3, the associated long exact sequence gives that $\varphi_*\colon\Gamma (X,\Hom (L,\Lambda))\to\Gamma (X,\Hom (L,L))$ is surjective, and we can proceed as above when $L,\Lambda$ were trivial.
\end{proof}

Next we turn to defining a locally convex topology on the space of sections of a holomorphic Banach bundle $L\to X$ and on its cohomology groups, following \cite{L}.
By a weight on a complex manifold $X$ we mean a locally bounded function $w\colon X\to (0,\infty)$, and we denote by $W(X)$ the set of all weights.
This is a directed set with the partial order $w>w'$ meaning $w(x)>w'(x)$ for all $x\in X$.
If $(E,\|\ \|)$ is a Banach space, we write ${\cO}^E(X)$ for the space of holomorphic functions $X\to E$, and if $w\in W(X)$,
$$
{\cO}^E(w)=\{f\in{\cO}^E(X)\colon \|f\|_w=\sup_{x\in X} \|f(x)\|/w(x)<\infty\}.
$$
Thus $({\cO}^E(w),\|\ \|_w)$ is a Banach space.
The $\tau_\delta$ topology on ${\cO}^E(X)$ is the locally convex direct limit topology of the ${\cO}^E(w)$, see [L, Proposition 5.1].
Basic neighborhoods of $0\in{\cO}^E(X)$, parametrized by functions $\varepsilon\colon W(X)\to (0,\infty)$, are the convex hulls of sets of form $\bigcup_{w\in W(X)}\{f\in {\cO}^E(w)\colon \|f\|_w < \varepsilon (w)\}$.
From this description it is clear that $\tau_\delta$ is Hausdorff.
The $\tau_\delta$ topology was first introduced by Nachbin in \cite{N}; a variant, that often agrees with it, was studied by Coeur\'e in \cite{Co}.
When $X$ is finite dimensional and second countable, $\tau_\delta$ is the same as the compact--open topology, but in general $\tau_\delta$ is finer.

As sections of a trivial bundle $T=X\times E\to X$ are in one to one correspondence with functions $X\to E$, we obtain a topology on $\Gamma(X,T)$, also denoted $\tau_\delta$.
Suppose now $L\to X$ is a holomorphic Banach bundle over a Stein manifold.
By Theorem 2.3 and by Lemma 2.2 we can choose a trivial bundle $T\to X$ in which $L$ is a direct summand.
Projection $p\colon T\to L$ induces a surjection $p_*\colon\Gamma (X,T)\to\Gamma (X,L)$, and we define the $\tau_\delta$ topology on $\Gamma(X,L)$ as the finest topology for which $p_*$ is continuous.
As discussed in [L, Section 4], this topology is independent of the choice of $T$ and of the complete epimorphism $p$.
A homomorphism $\varphi\colon L\to L'$ induces a continuous map $\Gamma(X,L)\to\Gamma(X,L')$, see [L, Proposition 4.3].
Hence:

\setcounter{prop}{+3}
\begin{prop}The map
$$
\Gamma(X,L)\oplus\Gamma (X,L')\ni (f,f')\mapsto f\oplus f'\in \Gamma(X,L\oplus L')
$$
is a topological isomorphism.
In particular, $\Gamma(X,L)\ni f\mapsto f\oplus 0\in\Gamma (X,L\oplus L')$ is a topological embedding.
\end{prop}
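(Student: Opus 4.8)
The plan is to read off the Proposition from the functoriality of the $\tau_\delta$ topology under holomorphic bundle homomorphisms over a Stein manifold, i.e.\ from [L, Proposition 4.3] as quoted just before the statement; since $X$ is Stein, each of $\Gamma(X,L)$, $\Gamma(X,L')$, $\Gamma(X,L\oplus L')$ carries this topology. First I would introduce the four canonical holomorphic homomorphisms of Banach bundles over $X$: the inclusions $\iota\colon L\to L\oplus L'$, $\iota'\colon L'\to L\oplus L'$ and the projections $\pi\colon L\oplus L'\to L$, $\pi'\colon L\oplus L'\to L'$, which satisfy $\pi\iota=\id_L$, $\pi'\iota'=\id_{L'}$, $\pi\iota'=0$, $\pi'\iota=0$ and $\iota\pi+\iota'\pi'=\id_{L\oplus L'}$. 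By [L, Proposition 4.3] the induced linear maps $\iota_*$, $\iota'_*$, $\pi_*$, $\pi'_*$ between the corresponding section spaces are all continuous.

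Next I would observe that the map $\Phi$ of the Proposition, $\Phi(f,f')=f\oplus f'$, equals $(f,f')\mapsto\iota_*f+\iota'_*f'$. On the finite direct sum $\Gamma(X,L)\oplus\Gamma(X,L')$ the locally convex direct sum topology coincides with the product topology, so the two partial maps $(f,f')\mapsto\iota_*f$ and $(f,f')\mapsto\iota'_*f'$ are continuous, and since addition in the topological vector space $\Gamma(X,L\oplus L')$ is continuous, $\Phi$ is continuous. Conversely, the composition identities above (and functoriality of $\Gamma(X,\,\cdot\,)$) show that $g\mapsto(\pi_*g,\pi'_*g)$ is a two-sided inverse of $\Phi$, and it is continuous because both of its coordinates are. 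Hence $\Phi$ is a topological isomorphism. The ``in particular'' clause is then immediate: $f\mapsto f\oplus0$ is the composite of the obvious topological embedding $\Gamma(X,L)\ni f\mapsto(f,0)\in\Gamma(X,L)\oplus\Gamma(X,L')$ with the isomorphism $\Phi$.

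I expect no genuine obstacle here, since the whole content sits in [L, Proposition 4.3]; the only points deserving a line of comment are that a finite locally convex direct sum carries the product topology — so that continuity of $\Phi$ and of $\Phi^{-1}$ each reduces to continuity of finitely many coordinate maps plus continuity of addition — together with the displayed identities among $\iota,\iota',\pi,\pi'$. If one preferred to avoid citing [L, Proposition 4.3], one could instead argue straight from the definition of $\tau_\delta$: choose, via Theorem 2.3 and Lemma 2.2, trivial bundles $T\supset L$ and $T'\supset L'$ as direct summands, identify $\Gamma(X,T\oplus T')$ with $\Gamma(X,T)\oplus\Gamma(X,T')$ as topological vector spaces, note that the defining surjection $(p\oplus p')_*\colon\Gamma(X,T\oplus T')\to\Gamma(X,L\oplus L')$ is $p_*\oplus p'_*$, and use that a linear topological quotient map is open while a product of open maps is open; this presents the $\tau_\delta$ topology on $\Gamma(X,L\oplus L')$ as the product of those on $\Gamma(X,L)$ and $\Gamma(X,L')$, which is exactly the assertion.
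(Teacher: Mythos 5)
Your argument is exactly the one the paper intends: Proposition 2.4 is stated with the single word ``Hence'' following the remark that a bundle homomorphism induces a continuous map on sections ([L, Proposition 4.3]), and your first paragraph merely spells out the canonical inclusions and projections and the composition identities that make this deduction work. The proposal is correct and takes essentially the same approach as the paper.
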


We apply this when $L'$ is chosen so that $L\oplus L'=T$ is trivial.
Since $\Gamma(X,T)$ is Hausdorff, and sequentially complete by [L, Theorem 7.1], we obtain:

\begin{prop}The $\tau_\delta$ topology on $\Gamma(X,L)$ is Hausdorff and sequentially complete.
\end{prop}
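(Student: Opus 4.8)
The plan is to deduce both assertions from Proposition 2.4 together with the facts already recorded for trivial bundles. First I would fix, as in the construction preceding the statement, a trivial bundle $T\to X$ with model fiber $E$ and bundle homomorphisms exhibiting a splitting $T=L\oplus L'$; write $q\colon T\to L'$ for the projection onto the second summand and $\iota\colon\Gamma(X,L)\to\Gamma(X,T)$ for $f\mapsto f\oplus 0$. By [L, Proposition 4.3] the induced map $q_*\colon\Gamma(X,T)\to\Gamma(X,L')$ is continuous, and by Proposition 2.4 the map $(f,f')\mapsto f\oplus f'$ is a topological isomorphism $\Gamma(X,L)\oplus\Gamma(X,L')\to\Gamma(X,T)$; in particular $\iota$ is a topological embedding. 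I would also recall that $\Gamma(X,T)={\cO}^E(X)$ is Hausdorff by the very definition of $\tau_\delta$ and sequentially complete by [L, Theorem 7.1].

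Hausdorffness of $\Gamma(X,L)$ (and, symmetrically, of $\Gamma(X,L')$) is then immediate: since $\Gamma(X,T)$ is Hausdorff, so is the isomorphic space $\Gamma(X,L)\oplus\Gamma(X,L')$, and hence so are both of its summands.

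For sequential completeness I would argue as follows. Let $(f_n)$ be a $\tau_\delta$-Cauchy sequence in $\Gamma(X,L)$. Then $(\iota f_n)$ is Cauchy in $\Gamma(X,T)$ by continuity of $\iota$, so it converges to some $g\in\Gamma(X,T)$ by the sequential completeness of the latter. Applying the continuous map $q_*$ and using that $q_*\iota f_n=0$ for all $n$, together with the Hausdorffness of $\Gamma(X,L')$, we obtain $q_*g=0$. Since the splitting $T=L\oplus L'$ forces $\Ker q_*=\iota(\Gamma(X,L))$, there is a unique $h\in\Gamma(X,L)$ with $g=\iota h$; and because $\iota$ is a topological embedding, $f_n\to h$ in the $\tau_\delta$ topology of $\Gamma(X,L)$. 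Thus every Cauchy sequence converges.

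I do not expect a genuine obstacle here: all the weight is carried by Theorem 2.3, Lemma 2.2 and the quoted properties of ${\cO}^E(X)$, and the present statement merely assembles these through Proposition 2.4. The only step demanding a moment's thought is verifying that $\iota(\Gamma(X,L))$ is sequentially closed in $\Gamma(X,T)$ — equivalently, that a $\tau_\delta$-limit of sections of the summand $L$ is again a section of $L$ — which is exactly what the computation with $q_*$ and the Hausdorffness of $\Gamma(X,L')$ delivers.
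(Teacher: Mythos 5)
Your proposal is correct and follows exactly the route the paper intends: Proposition 2.4 realizes $\Gamma(X,L)$ as a topological direct summand of $\Gamma(X,T)$, and Hausdorffness plus sequential completeness of $\Gamma(X,T)$ (the latter from [L, Theorem 7.1]) then descend to the summand. The paper states this in one line and leaves the details implicit; you have merely written out the standard verification that a complemented subspace, being the kernel of a continuous projection, is sequentially closed.
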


\cite{L} also introduces a so called $\tau^\delta$ topology on $\Gamma(X,L)$ when $X$ is just locally Stein, but we will not need it here.
Next let $L\to X$ be a holomorphic Banach bundle over a locally Stein manifold and $\fU=\{U_a\colon a\in A\}$ a cover of $X$ by Stein open subsets.
As usual, if $q\geq 0$ and $a=(a_0,\ldots,a_q)\in A^{q+1}$ is a $q$--simplex, we write $U_a=U_{a_0}\cap\ldots\cap U_{a_q}$; by [L, Proposition 3.1] $U_a$ is Stein.
We also introduce a $(-1)$ simplex $a=\emptyset$, which constitutes $A^0$, and set $U_\emptyset=X$.
The disjoint union $\fU_q=\coprod_{a\in A^{q+1}}U_a$ is a Stein manifold when $q\geq 0$.
We denote by $\rho_q\colon\fU_q\to X$ the local biholomorphism for which $\rho_q|U_a$ is the embedding $U_a\hookrightarrow X$.
There is a natural vector space isomorphism between $\Gamma(\fU_q,\rho_q^* L)$ and the space
$$
C^q(\fU,L)=\prod_{a\in A^{q+1}}\Gamma (U_a,L)
$$
of not necessarily alternating cochains
$$
\Gamma(\fU_q,\rho_q^* L)\ni f\mapsto (f|U_a)_{a\in A^{q+1}}\in C^q (\fU,L),
$$
and we define the topology on $C^q(\fU,L)$ as the image of the topology on $\Gamma(\fU_q,\rho_q^* L)$.
\v Cech coboundary $\delta=\delta^q\colon C^q (\fU,L)\to C^{q+1}(\fU,L)$ is continuous, and the cohomology groups $H^q (\fU,L)=\Ker\delta^q/\im\delta^{q-1}$ are given the subquotient topology.
This is a locally convex topology but not necessarily Hausdorff.
The topology on \v Cech cohomology groups $\check H^q (X,L)=
\underset{\longrightarrow}{\lim} \ H^q(\fU,L)$ is the direct limit topology, the finest locally convex topology for which the canonical maps
\begin{equation}
H^q (\fU,L)\to\check H^q (X,L)
\end{equation}
are continuous.
According to the main theorem of \cite{L}, Theorem 4.5 there, (2.2) is in fact a topological isomorphism.
([L, Theorem 4.5] applies only to so--called separated cohesive sheaves, [L, Definition 4.1], but the sheaf of holomorphic sections of a Banach bundle is separated by [L, Lemma 4.2], as separation is a local property.)

\section{Holomorphic cochains}

Let $L\to X$ be a holomorphic Banach bundle, $\fU=\{U_a\colon a\in A\}$ an open cover of $X$ as before, but suppose now that $A$ itself is a complex manifold and that
$$
S=\{(a,x)\in A\times X\colon x\in U_a\}
$$
is a Stein open subset of $A\times X$.
It follows that $U_a=\{x\in X\colon (a,x)\in S\}$ are Stein, hence $X$ is locally Stein.
It also follows that for $p=0,1,\ldots$ the fiber product
\begin{equation}
S^{[p+1]}=\{(a_0,\ldots,a_p,x)\in A^{p+1}\times X\colon x\in U_{a_0}\cap \ldots\cap U_{a_q}\}
\end{equation}
is a Stein submanifold of $S^{p+1}$, and in fact a Stein open subset of $A^{p+1}\times X$.
We put $S^{[0]}=X$.
Let $\pi_p\colon S^{[p+1]}\to X$ denote the projection.
The space $C_{\hol}^p (\fU,L)\subset C^p (\fU,L)$ of cochains $(f_{a_0\ldots a_p})$ depending holomorphically on $a_0,\ldots,a_p$ can be identified with $\Gamma(S^{[p+1]},\pi_p^* L)$, where to $(f_{a_0\ldots a_p})$ corresponds $f$ defined by $f(a_0,\ldots,a_p,x)=f_{a_0\ldots a_p}(x)$.

There are two natural topologies on $C_{\hol}^p (\fU,L)$:\ the one inherited as a subspace of $C^p(\fU,L)$, and the one coming from identification with $\Gamma(S^{[p+1]},\pi_p^* L)$, the latter being the finer.
Accordingly, there are two ways to induce topology on $H^p_{\hol} (\fU,L)=H^p (C^\bullet_{\hol}(\fU,L))$, to which we refer as the coarser and finer topologies.
In fact, the two coincide:

\begin{thm}Inclusion $C^\bullet_{\hol}(\fU,L)\hookrightarrow C^\bullet (\fU,L)$ induces topological isomorphisms of cohomology groups $H_{\hol}^n (\fU,L)\to H^n (\fU,L),n=0,1,\ldots$, whether the former is endowed with the finer or the coarser topology.
The same holds if the cochain complexes are replaced by the complexes of alternating cochains.
\end{thm}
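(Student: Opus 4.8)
The plan is to interpolate between the two cochain complexes by a double complex $M^{\bullet,\bullet}$, one of whose differentials is the \v Cech differential in the ``discrete'' parameter (recovering $C^\bullet(\fU,L)$) and the other the \v Cech differential in the ``holomorphic'' parameter (recovering $C^\bullet_{\hol}(\fU,L)$). For $p,q\geq0$ set $M^{p,q}=\Gamma\bigl(\fU_p\times_X S^{[q+1]},\,L\bigr)$ (the pullback of $L$ being understood), with its $\tau_\delta$ topology; since $\fU_p\times_X S^{[q+1]}=\coprod_{a\in A^{p+1}}\pi_q^{-1}(U_a)$ is a disjoint union of Stein manifolds it is Stein, so the results of Section 2 apply to $M^{p,q}$. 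The face maps of $\fU_\bullet$ give a continuous horizontal differential $\delta'\colon M^{p,q}\to M^{p+1,q}$, those of $S^{[\bullet+1]}$ a continuous vertical differential $\delta''\colon M^{p,q}\to M^{p,q+1}$, and, using $\fU_{-1}=S^{[0]}=X$, these extend to augmentations $\epsilon'\colon M^{-1,q}:=C^q_{\hol}(\fU,L)=\Gamma(S^{[q+1]},\pi_q^*L)\to M^{0,q}$ (the source carrying the finer topology) and $\epsilon''\colon M^{p,-1}:=C^p(\fU,L)=\Gamma(\fU_p,\rho_p^*L)\to M^{p,0}$. Write $T^\bullet=\mathrm{Tot}^\bullet(M^{\geq0,\geq0})$.

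The argument rests on two facts. First, \emph{the augmented columns are topologically contractible}: for fixed $p$, the $p$th augmented column is, over each $a=(a_0,\dots,a_p)\in A^{p+1}$, the augmented \v Cech complex of the holomorphic submersion $S\cap(A\times U_a)\to U_a$, which carries the holomorphic section $x\mapsto(a_0,x)$; insertion of this section into the $0$th slot is a continuous contracting homotopy. Hence (the cone of $\epsilon''$ is the total complex of a first-quadrant double complex with contractible columns, and the staircase iteration contracting it is continuous) $\epsilon''\colon C^\bullet(\fU,L)\to T^\bullet$ is a topological chain homotopy equivalence. Second, \emph{the augmented rows are exact}: for fixed $q$, the $q$th augmented row is the augmented \v Cech complex of the cover $\{\pi_q^{-1}(U_a)\}_{a\in A}$ of the Stein manifold $S^{[q+1]}$ with coefficients in $\pi_q^*L$; each finite intersection $\pi_q^{-1}(U_{a_0}\cap\dots\cap U_{a_p})$ — obtained from $S^{[p+q+2]}$ by fixing the first $p+1$ indices — is a closed direct submanifold of the Stein manifold $S^{[p+q+2]}$, hence Stein, hence $\pi_q^*L$-acyclic by Theorem 2.3(b), so the cover is a Leray cover and, by Leray's theorem together with Theorem 2.3(b), the row is exact. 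Therefore $\epsilon'\colon C^\bullet_{\hol}(\fU,L)\to T^\bullet$ is a quasi-isomorphism. Combining, $H^n(C^\bullet_{\hol}(\fU,L))\cong H^n(T^\bullet)\cong H^n(\fU,L)$; a standard verification that $\epsilon''\circ\iota$ and $\epsilon'$ are chain homotopic (the double complex being built to interpolate the inclusion $\iota\colon C^\bullet_{\hol}(\fU,L)\hookrightarrow C^\bullet(\fU,L)$) shows that this isomorphism is $\iota_*$.

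It suffices to prove the assertion for the finer topology. Indeed, once $\iota$ induces a homeomorphism $H^n_{\hol}(\fU,L)_{\mathrm{fine}}\xrightarrow{\ \sim\ }H^n(\fU,L)$, the \emph{same} map $\iota$, being continuous also as a chain map out of $C^\bullet_{\hol}(\fU,L)$ with its coarser (subspace) topology, induces a continuous bijection $H^n_{\hol}(\fU,L)_{\mathrm{coarse}}\to H^n(\fU,L)$; as the coarse topology on $H^n$ is coarser than the fine one and the fine version is a homeomorphism, the inverse is continuous too, so this map is also a homeomorphism — and the two topologies on $H^n(C^\bullet_{\hol}(\fU,L))$ coincide. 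Thus the one genuinely non-formal ingredient is the \emph{topological} refinement of the second fact: that the $\tau_\delta$-\v Cech complex of a Leray cover of a Stein manifold is not just exact but topologically exact (its coboundaries strict, $\epsilon'$ a topological isomorphism onto $\Ker\delta'|_{M^{0,q}}$). This is exactly what the $\tau_\delta$-\v Cech theory of \cite{L} provides — it is what underlies [L, Theorem 4.5] — and with it in hand the comparison above is topological throughout. Finally, the alternating version is obtained by running the whole argument for the sub--double complex of cochains alternating in both index sets; its inclusion is a topological chain homotopy equivalence compatible with all differentials and augmentations, since the alternation operator and its standard homotopy are finite signed sums of pullbacks along permutations and degeneracies of the indices, which are continuous and preserve holomorphic dependence on $a_0,\dots,a_p$.

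The main obstacle is this last point: showing that the second fact holds \emph{topologically}, i.e.\ that the Stein-cover \v Cech complexes forming the rows of $M^{\bullet,\bullet}$ have strict $\tau_\delta$-coboundaries. The contractibility of the columns is elementary and explicit, the passage through the total complex is routine homological algebra, and the reduction of the coarse statement to the fine one is formal; but the required strictness genuinely rests on the functional analysis of \cite{L,LP,P}.
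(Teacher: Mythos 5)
Your proposal is correct and follows essentially the paper's own route: the same double complex of partially holomorphic cochains (your $M^{p,q}$ is the paper's $K^{q,p}$), the same explicit insertion homotopy contracting the augmented columns (Proposition 4.1), the same appeal to the topological exactness of the augmented \v Cech complex of a Stein cover of a Stein manifold, taken from \cite{L}, for the rows (Proposition 4.2, i.e.\ [L, Theorem 4.6]), and the same sandwiching argument deducing the coarser-topology statement from the finer one. The only organizational difference is that the paper also introduces the sub--double complex $J^{p,q}$ of cochains holomorphic in \emph{both} groups of indices, so that the comparison with the inclusion $\iota$ becomes a strictly commuting square of augmentations, whereas you must supply the (true, but here only asserted as ``a standard verification'') continuous chain homotopy between $\epsilon''\circ\iota$ and $\epsilon'$.
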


In other words, inclusion $C^\bullet_{\hol} (\fU,L)\to C^\bullet (\fU,L)$ is a topological quasi--isomorphism.

\section{The proof of Theorem 3.1}

Predictably, Theorem 3.1 will follow from the study of double complexes.
For $p,q\geq 0$ let
\begin{eqnarray}
K^{pq}=C^q (\pi_p^{-1} \fU,\pi_p^* L)\supset J^{pq}=C^q_{\hol}(\pi_p^{-1} \fU,\pi_p^* L)
\end{eqnarray}
be spaces of cochains on $S^{[p+1]}, K=(K_{pq})_{p,q\geq 0}$, $J=(J_{pq})_{p,q\geq 0}$.
There are differentials 
$$
\delta^{pq}\colon K^{pq}\to K^{p,q+1}\text{ and }\partial^{pq}\colon K^{pq}\to K^{p+1,q},
$$
the first \v Cech coboundary, the second fiberwise Alexander--Spanier coboundary.
That is, if $f=(f_a)\in K^{pq}$ then
\begin{gather}
(\delta f)_{a_0\ldots a_{q+1}}=\sum_0^{q+1} (-1)^i f_{a_0\ldots \hat a_i\ldots a_{q+1}},\\
(\partial f)_a (x,b_0,\ldots,b_{p+1})=\sum_0^{p+1} (-1)^i f_a(x,b_0,\ldots,\hat b_i,\ldots,b_{p+1}),
\end{gather}
$(x,b_{0},\ldots,b_{p+1})\in\pi_p^{-1} U_a$.
The terms in (4.3) are all in different Banach spaces
$$
(\pi_{p+1}^* L)_{(x,b_0,\ldots,b_{p+1}) }\text{ and }(\pi_p^* L)_{(x,b_0,\ldots,\hat b_i,\ldots,b_{p+1})},
$$
and the equality of the two sides is understood after the canonical identification of these fibers with $L_x$.
Clearly, $\partial\delta=\delta\partial$, and $J$ is a subcomplex.

If $K^{pq}$ are endowed with the topology described in Section 2, and $J^{pq}$ with the topology induced by its identification with $\Gamma(S^{[p+1]},\pi_p^* L)$, then the embeddings $J^{pq}\to K^{pq}$ are continuous, as are the differentials $\delta,\partial$ on $K$ and $J$.
We write $\delta_J,\partial_J$ for the differentials restricted to the topological double complex $J$.

We can augment $K$ and $J$ by the columns
$$
K^{-1,\bullet}=C^\bullet (\fU,L),\qquad J^{-1,\bullet}=C^\bullet_{\hol} (\fU,L),
$$
or by the rows $K^{\bullet,-1}=J^{\bullet,-1}$, still defined by (4.1), which $\partial^{-1,\bullet}$ or $\delta^{\bullet,-1}$ map bijectively onto
$\Ker\partial^{0,\bullet}$, $\Ker\partial_J^{0,\bullet}$, resp.~$\Ker\delta^{\bullet,0}=\Ker\delta_J^{\bullet,0}$.

\begin{prop}For $p,q\geq 0$ the augmented complexes $K^{\bullet q},J^{\bullet q}$ and $J^{p\bullet}$ are exact.
Even better, for $n\geq -1$
\begin{gather*}
\partial^{nq} \colon  K^{nq}\to\Ker\partial^{n+1,q},\qquad \partial_J^{nq}\colon  J^{nq}\to\Ker\partial_J^{n+1,q},\text{ and}\\
\delta_J^{pn} \colon  J^{pn}\to \Ker\delta_J^{p,n+1}
\end{gather*}
have continuous linear right inverses, hence they are open maps.
\end{prop}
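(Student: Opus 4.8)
The plan is to recognize each of the three augmented complexes as the fiberwise \v Cech--Alexander--Spanier (Amitsur) complex of a surjective holomorphic submersion, and to exploit the fact that, because the cover $\fU$ is ``self--indexed'', every one of these submersions admits a holomorphic section; a section then produces a continuous linear contracting homotopy, which is more than enough.

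First I would fix the dictionary. Applying the identification of Section 3 to the cover $\pi_p^{-1}\fU$ of the Stein manifold $S^{[p+1]}$ gives $J^{pq}=C^q_{\hol}(\pi_p^{-1}\fU,\pi_p^*L)\cong\Gamma(S^{[p+q+2]},\mathrm{pr}^*L)$, where $\mathrm{pr}$ is the projection to $X$ and the first $q+1$ of the $A$--coordinates of $S^{[p+q+2]}$ play the role of \v Cech slots. Under this, the row $J^{p\bullet}$ together with its augmentation $J^{p,-1}=\Gamma(S^{[p+1]},\pi_p^*L)$ becomes the Amitsur complex of the forgetful map $S^{[p+2]}\to S^{[p+1]}$ that drops the single \v Cech--slot $A$--coordinate, and the column $J^{\bullet q}$ together with $J^{-1,q}=C^q_{\hol}(\fU,L)$ becomes that of $S^{[q+2]}\to S^{[q+1]}$ dropping the extra $A$--coordinate in the $\partial$--direction; in both cases $\delta_J$, resp.\ $\partial_J$, is the alternating sum of pullbacks along the corresponding face maps, and the relevant iterated fiber powers are again the manifolds $S^{[m]}$, already known from Section 3 to be Stein. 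The column $K^{\bullet q}$ is the product over $\alpha=(a_0,\ldots,a_q)\in A^{q+1}$ of the Amitsur complexes of the projections $S\cap(A\times U_\alpha)\to U_\alpha$, augmented at the $\alpha$--component by $\Gamma(U_\alpha,L)$; here the $(p{+}1)$--fold fiber power over $U_\alpha$ is $\pi_p^{-1}(U_\alpha)$, biholomorphic to the closed direct submanifold of the Stein manifold $S^{[p+q+2]}$ cut out by fixing the last $q+1$ of the $A$--coordinates equal to $a_0,\ldots,a_q$, hence Stein.

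Next I would write down the sections. Since $U_\alpha\subset U_{a_0}$, the constant map $x\mapsto(a_0,x)$ is a holomorphic section of $S\cap(A\times U_\alpha)\to U_\alpha$; and duplicating an already present $A$--coordinate gives holomorphic sections of the other two submersions, e.g.\ $(b_0,\ldots,b_p,x)\mapsto(b_0,b_0,\ldots,b_p,x)$ for $S^{[p+2]}\to S^{[p+1]}$, legitimate precisely because $x\in U_{b_0}$. A holomorphic section $s$ of a surjective submersion $P\to Y$ equips the augmented semicosimplicial object $n\mapsto\Gamma(P^{[n+1]},\mathrm{pr}^*L)$ with an extra degeneracy $\sigma^{(k)}(x_0,\ldots,x_k)=(s(y),x_0,\ldots,x_k)$, where $y\in Y$ is the common image of $x_0,\ldots,x_k$; the pullbacks $h=(\sigma^{(\cdot)})^*$ satisfy the identities $\partial_0\sigma^{(k)}=\id$ and $\partial_i\sigma^{(k)}=\sigma^{(k-1)}\partial_{i-1}$ for $i\ge1$, hence form a contracting homotopy of the associated alternating--sum complex, so that complex is exact. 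Because each $\sigma^{(k)}$ is holomorphic, $h$ is continuous and linear for the $\tau_\delta$ topology: for a trivial coefficient bundle this is the estimate $\|f\circ\varphi\|_{w\circ\varphi}\le\|f\|_w$ carried through the locally convex direct limit over weights that defines $\tau_\delta$, and for a general $L$ one factors through the topological embedding of $L$ as a direct summand of a trivial bundle over the (Stein) base and the continuous projection, using Theorem 2.3 and Proposition 2.4. Restricting $h$ to cocycles then yields the asserted continuous linear right inverses of $\partial^{nq}$, $\partial_J^{nq}$ and $\delta_J^{pn}$ onto the kernels of the succeeding differentials, and a continuous linear right inverse forces such a map to be an open surjection onto that kernel. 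For $K^{\bullet q}$ the homotopies over the various $\alpha$ are all ``insert $a_0$'', so they assemble into a single homotopy given by pullback along one holomorphic map of the disjoint unions $\coprod_\alpha\pi_p^{-1}(U_\alpha)=(\pi_p^{-1}\fU)_q$ (again $\tau_\delta$--continuous), and this homotopy visibly restricts on the subcomplex $J^{\bullet q}$ to the one built there.

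The routine but error--prone part is the bookkeeping of the first step: verifying that the face maps of the manifolds $S^{[m]}$ line up so that $\delta_J$ and $\partial_J$ literally become Amitsur differentials, and that Steinness of the auxiliary fiber powers is correctly inherited. The one genuinely technical point is the $\tau_\delta$--continuity of $h$, which rests on the description of $\tau_\delta$ from Section 2 and on the continuity of pullback of holomorphic sections along holomorphic maps of Stein manifolds; once the three complexes have been identified as contractible Amitsur complexes, everything else is formal.
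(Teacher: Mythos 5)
Your proof is correct and is essentially the argument the paper gives: the paper's explicit operators $\lambda$ and $\mu$ are precisely the insertion/extra--degeneracy homotopies you construct from the tautological sections (the paper merely averages over all $q+1$, resp.\ $p+1$, possible insertions $a_i$, resp.\ $b_i$, instead of using only $a_0$, resp.\ $b_0$), and continuity is likewise read off from the fact that these are pullbacks along holomorphic maps. Nothing further is needed.
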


\begin{proof}If $f=(f_a)\in K^{n+1,q}$, $g=(g_a)\in J^{p,n+1}$, define $\lambda(f)\in K^{nq}$, $\mu(g)\in J^{pn}$ by
\begin{eqnarray*}
(\lambda f)_{a_0\ldots a_q} (x,b_0,\ldots,b_n)=\sum^q_{i=0}\ f_a (x,a_i,b_0,\ldots,b_n)/(q+1),\\
(\mu g)_{a_0\ldots a_n} (x,b_0,\ldots,b_p)=\sum^p_{i=0} g_{b_i a_0\ldots a_n} (x,b_0,\ldots,b_p)/(p+1).
\end{eqnarray*}
Then $\lambda|\Ker\partial$, $\lambda|\Ker\partial_J$, and $\mu|\Ker\delta_J$ are the required right inverses.
\end{proof}

This proof has little to do with the assumption that $S$ is Stein.
By contrast, the assumption is crucial in the next claim, which is [L, Theorem 4.6].

\begin{prop}For $p\geq 0$ the augmented complex $K^{p\bullet}$ is exact.
Even better, $\delta^{pn}\colon K^{pn}\to \Ker \delta^{p,n+1}$ is open for $n\geq -1$.
\end{prop}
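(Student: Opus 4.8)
The plan is to obtain the algebraic exactness of $K^{p\bullet}$ from a Leray argument, and then the stronger openness statement by producing continuous linear splittings of the \v Cech coboundaries.

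For exactness, the point is that $\pi_p^{-1}\fU=\{\pi_p^{-1}U_a:a\in A\}$ is a cover of the Stein manifold $S^{[p+1]}$ by Stein open sets whose finite intersections are again Stein. That each $\pi_p^{-1}U_a$ is Stein I would see exactly as Section~3 deduces that the $U_a$ are Stein from the Steinness of $S$: the incidence manifold of $\pi_p^{-1}\fU$, namely $\{(a,b_0,\ldots,b_p,x)\in A\times S^{[p+1]}:x\in U_a\}$, is $S^{[p+2]}$ up to a reordering of the $A$--factors, hence Stein; equivalently, $\pi_p^{-1}U_a$ is the fibre over $a$ of the first--coordinate projection $S^{[p+2]}\to A$, a holomorphic submersion from a Stein manifold, so it is a closed direct submanifold of $S^{[p+2]}$ and therefore Stein. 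The finite intersections $\pi_p^{-1}U_{a_0}\cap\cdots\cap\pi_p^{-1}U_{a_q}=\pi_p^{-1}(U_{a_0}\cap\cdots\cap U_{a_q})$ are then Stein by [L, Proposition 3.1]. By Theorem 2.3(b) the sheaf of holomorphic sections of $\pi_p^*L$ is acyclic on all these sets, so Leray's theorem gives $H^n(\pi_p^{-1}\fU,\pi_p^*L)\cong H^n(S^{[p+1]},\pi_p^*L)$; the latter equals $\Gamma(S^{[p+1]},\pi_p^*L)$ for $n=0$ and vanishes for $n\geq1$, again by Theorem 2.3(b) since $S^{[p+1]}$ is Stein. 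That is precisely the exactness of the augmented complex $K^{p\bullet}$.

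The openness of $\delta^{pn}\colon K^{pn}\to\Ker\delta^{p,n+1}$ for $n\geq-1$ is the harder, topological half, and I expect it to be the main obstacle; it is [L, Theorem 4.6]. Since the spaces $K^{pn}$ carry the non--metrizable $\tau_\delta$--type topology of Section~2, no open mapping theorem is available, so I would instead construct continuous linear right inverses $\kappa^n\colon\Ker\delta^{p,n+1}\to K^{pn}$ with $\delta^{pn}\kappa^n=\id$, which makes $\delta^{pn}$ open at once. For $n=-1$ this is the statement that the gluing map $\Gamma(S^{[p+1]},\pi_p^*L)\to\Ker\delta^{p,0}$ is a topological isomorphism, which belongs to the topological Leray theorem [L, Theorem 4.5]. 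For $n\geq0$, in contrast to the explicit averaging operators $\lambda,\mu$ of Proposition 4.1, no formula is available, because $\pi_p^{-1}\fU$ carries no holomorphic partition of unity --- and this is exactly the place where the Steinness of $S$, hence of $S^{[p+1]}$, is indispensable. The route I would take is: use Theorem 2.3(a) and Lemma 2.2 to realise $\pi_p^*L$ as a direct summand of a trivial bundle over $S^{[p+1]}$, reducing by Proposition 2.4 to the scalar--valued case; then solve the \v Cech coboundary equation on the Stein manifold $S^{[p+1]}$ --- solvable by the vanishing just established --- with bounds in the $\tau_\delta$ topology arranged to depend continuously and linearly on the given cocycle. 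Promoting the Leray--Cartan vanishing to a topological statement in this way is the technical core; I would invoke [L, Theorem 4.6] for it.
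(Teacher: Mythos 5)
Your proposal is correct and matches the paper's treatment: the paper gives no independent argument for this proposition but simply identifies it with [L, Theorem 4.6], which is exactly the citation you fall back on for the topological (openness) half, the genuinely hard part. Your supplementary Leray-type sketch of the algebraic exactness (Steinness of the sets $\pi_p^{-1}U_a$ and their intersections, plus Theorem 2.3(b)) is consistent with the paper's framework and is a reasonable gloss on what [L, Theorem 4.6] encapsulates.
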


\medskip
\noindent
{\begin{proof} [Proof of Theorem 3.1]
The total complexes $K^\bullet\subset J^\bullet$ of $K,J$ are given by
$$
K^n=\bigoplus^n_{p=0} K^{p,n-p},\ J^n=\bigoplus^n_{p=0} J^{p,n-p},
$$
on which the differentials are $d=\partial'+\delta,d_J=d|J^\bullet$, where $\partial^{'pq}=(-1)^p\partial^{pq}$.
In light of Propositions 4.1 and 4.2, the embeddings
\begin{align}
K^{-1,\bullet}=C^\bullet (\fU,L)&\overset{\partial^{-1,\bullet}}{\longrightarrow} K^\bullet, & J&^{-1,\bullet}=C^\bullet_{\hol} (\fU,L) \overset{\partial_J^{-1,\bullet}}{\longrightarrow} J^\bullet,\\
K^{\bullet,-1}\overset{\delta^{\bullet,-1}}{\longrightarrow} & K^\bullet,  & &J^{\bullet,-1}=K^{\bullet,-1} \overset{\delta_J^{\bullet,-1}}{\longrightarrow} J^\bullet
\end{align}
are quasi--isomorphisms of complexes of vector spaces, see e.g.~[S, Proposition 1, page 220].
Since the first map in (4.5) is the composition of the second with the inclusion $J^\bullet\hookrightarrow K^\bullet$, it follows that the latter is quasi--isomorphic.
As the diagram
\begin{eqnarray}
\begin{CD}
C^\bullet (\fU,L) & \overset{\partial^{-1,\bullet}}{\longrightarrow} & K^\bullet\\
\cup&&\cup\\
C^\bullet_{\hol} (\fU,L) & \overset{\partial_J^{-1,\bullet}}{\longrightarrow} & J^\bullet
\end{CD}
\end{eqnarray}
commutes, we deduce that the embedding $C^\bullet_{\hol}(\fU,L)\to C^\bullet (\fU,L)$ is a quasi--isomorphism of complexes of vector spaces.

The induced vector space isomorphism on cohomology is clearly continuous.
To show it is a topological isomorphism we need to verify it is open.
This follows by the same analysis as above, except that instead of [S, Proposition 1, page 220] we use [L, Proposition 2.4], in conjunction with the openness parts of Propositions 4.1, 4.2, to conclude (4.4) and (4.5) induce open maps in cohomology.
Hence, passing to cohomology in (4.6) we obtain a diagram in which three maps are topological isomorphisms, and therefore $H^n_{\hol} (\fU,L)\to H^n (\fU,L)$ must also be.

This finishes the proof of Theorem 3.1 when $H_{\hol}^n(\fU,L)$ is endowed with the finer topology.
But since the coarser topology is sandwiched between the finer one and the topology inherited from $H^n(\fU,L)$, the result follows for the coarser topology as well.

Finally, it is routine to check that the same argument works for cohomology groups that are defined in terms of alternating cochains.
\end{proof}

\section{Application:\ holomorphic group actions}

Suppose on a holomorphic Banach bundle $L\to X$ a complex reductive group $G$ acts holomorphically.
In this section we show how to decompose the cohomology groups $H^q(X,L)$ into isotypical subspaces, under certain assumptions on $X$ and the action.
The assumption will imply that $G$ acts on the cohomology groups, but the general theory of locally convex representations of reductive groups does not apply, because $H^q(X,L)$ is not guaranteed to be complete or Hausdorff.---This is not an issue when $X$ is compact and $L$ is of finite rank; but even line bundles over finite dimensional noncompact $X$ and Hilbert bundles over compact $X$ can exhibit non--Hausdorff cohomology groups.---
Instead, we shall work with a $G$--invariant Stein cover $\fU=\{U_a\colon a\in A\}$, and decompose the cochain groups of this cover.
The advantage is that the cochain groups are at least Hausdorff; however, on $C^q(\fU,L)$ the action is not holomorphic.
At this point enter the holomorphic cochains:\ on $C_{\hol}^q(\fU,L)$ the action is (often) holomorphic, and its isotypical decomposition will descend to a decomposition of cohomology groups because of Theorem 3.1.

We start with a Banach bundle $L\to X$ over a Stein manifold, and investigate the induced action on the locally convex space $\Gamma(X,L)$.
It will be convenient to consider actions not just of groups $G$ but of arbitrary sets, perhaps endowed with a topology or a manifold structure.

\begin{defn}An action of a set $G$ on $L$ means a collection of holomorphic, resp.~biholomorphic maps $\alpha_g\colon L\to L$, $\xi_g\colon X\to X$, $g\in G$, such that $\alpha_g
(L_{\xi_g x})\subset L_x$ for $x\in X$.
If $G$ is a topological space or a complex manifold, we say the action is continuous, resp.~holomorphic, if the maps
$$
\alpha\colon G\times L\ni (g,l)\mapsto\alpha_g (l)\in L,\qquad \xi\colon G\times X\ni (g,x)\mapsto \xi_g(x)\in X
$$
are continuous, resp.~holomorphic.
A continuous action is locally uniformly continuous if each $(g_0,x_0)\in G\times X$ has a neighborhood $G_0\times U$ such that  $\xi_g\to\xi_{\overline g}$ uniformly on $U$ and $\alpha_g\to\alpha_{\overline g}$ uniformly on a neighborhood of the zero section in $L_U$, as $g\to\overline g\in G_0$.
\end{defn}

The notion of uniform convergence for maps with values in a manifold is understood after the target manifold is locally identified with a Banach space, so that for small enough $G_0$ and $U$ the maps in question can be thought to take values in a Banach space.

Thus a left action of a group is an action that respects the group structure in the sense that $\alpha_e=\id_L$ and $\alpha_{gh}=\alpha_g\alpha_h$ (which implies $\xi_{gh}=\xi_h\xi_g$, so $\xi$ is a right action).
Clearly, in general $\alpha_g$ determines $\xi_g$ uniquely, and one can talk of $\alpha$ as the action.

An action $(\alpha,\xi)$ on $L$ determines an action $\beta$ on $\Gamma(X,L)$ by
\begin{equation}
(\beta_g f)(x)=\alpha_g (f(\xi_g x)),\qquad g\in G,f\in\Gamma(X,L),x\in X.
\end{equation}

\setcounter{lemma}{+1}
\begin{lemma}Suppose $X$ is second countable and $G$ is a locally compact topological space, resp.~a finite dimensional complex manifold.
If a $G$--action $\alpha$ on $L$ is locally uniformly continuous, resp.~holomorphic, then the induced action on $\Gamma(X,L)$,
\begin{equation}
\beta\colon G\times\Gamma (X,L)\ni (g,f)\mapsto (\beta_g f)\in\Gamma (X,L)
\end{equation}
is continuous, resp.~holomophic.
\end{lemma}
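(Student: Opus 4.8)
The plan is to reduce the statement to the case of a trivial bundle and then to an estimate in the $\tau_\delta$ topology, using second countability of $X$ to upgrade local uniform convergence to convergence in $\tau_\delta$. First I would dispose of the reduction to trivial bundles. Since $X$ is Stein, Theorem 2.3 and Lemma 2.2 exhibit $L$ as a direct summand $i\colon L\hookrightarrow T$ of a trivial bundle $T=X\times E$, with complementary projection $p\colon T\to L$. Put $A_g=i\circ\alpha_g\circ p\colon T\to T$. One checks routinely that $(A_g,\xi_g)$ is again an action of $G$ on $T$ in the sense of Definition 5.1, and, because $i$ and $p$ are holomorphic bundle maps, that it is locally uniformly continuous (resp.\ holomorphic) whenever $\alpha$ is. Its induced action $\gamma$ on $\Gamma(X,T)$ satisfies $\gamma_g\circ i_*=i_*\circ\beta_g$, and $i_*\colon\Gamma(X,L)\to\Gamma(X,T)$ is a topological embedding onto a complemented subspace by Proposition 2.4; so it is enough to treat $T$. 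Thus I may assume $L=X\times E$, identify $\Gamma(X,L)$ with ${\cO}^E(X)$, and observe that $(\beta_g f)(x)=\Phi(g,\xi_g x,f(\xi_g x))$ for a map $\Phi\colon G\times X\times E\to E$ that is continuous (resp.\ holomorphic) and is assembled from $\alpha$, $i$, $p$.

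\emph{A local weighted bound.} Because $G$ is locally compact (a finite dimensional manifold is locally compact), it suffices to prove continuity (resp.\ holomorphy) of each restriction $\beta\colon G\times{\cO}^E(w)\to({\cO}^E(X),\tau_\delta)$, $w\in W(X)$: product with $G$ commutes with the inductive limit ${\cO}^E(X)=\varinjlim_w{\cO}^E(w)$, and this is the one point where local compactness of $G$ is used, cf.\ \cite{L}. Fix $g_0\in G$, $f_0\in{\cO}^E(w)$ and a compact neighborhood $K\ni g_0$. From continuity of $\xi$, local boundedness of $w$, and local boundedness of $\Phi$ near the zero section (a consequence of local uniform continuity, resp.\ holomorphy, of $\alpha$ — the latter via a Cauchy estimate in the finite dimensional variable $g$), I would build a weight $W\in W(X)$, bounded on each member of a suitable cover of $X$, and a constant $C$, so that $\beta_g f\in{\cO}^E(W)$ and $\|\beta_g f\|_W\le C$ for every $g\in K$ and every $f$ in a fixed ${\cO}^E(w)$-ball about $f_0$. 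Hence $\beta$ carries a neighborhood of $(g_0,f_0)$ into a bounded subset of the Banach space ${\cO}^E(W)$.

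\emph{From local to $\tau_\delta$ convergence.} It remains to show $\beta(g,f)\to\beta(g_0,f_0)$ in $\tau_\delta$ as $(g,f)\to(g_0,f_0)$. Pointwise convergence is clear, and local uniform continuity of $\alpha$ (resp.\ the Cauchy estimate in $g$), together with continuity of $\xi$ and holomorphy of $f_0$, gives for each $x_0\in X$ a neighborhood $U(x_0)$ and a neighborhood $G^{(0)}(x_0)\ni g_0$ on which $\beta(g,f)(\cdot)$ converges uniformly to $\beta(g_0,f_0)(\cdot)$. Since $X$ is second countable, hence Lindel\"of, pick a countable subcover $\{V_n\}$ with associated $\{G^{(0)}_n\}$, shrunk so that $W$ is bounded on each $V_n$, and set $\tilde V_n=V_n\setminus\bigcup_{m<n}V_m$; then $x\in V_m$ forces $x\in\tilde V_n$ only for $n\le m$, so $w'(x)=n\,W(x)$ for $x\in\tilde V_n$ defines a weight. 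Given a basic $\tau_\delta$-neighborhood $U_\varepsilon$ of $0$, I would verify $\|\beta(g,f)-\beta(g_0,f_0)\|_{w'}<\varepsilon(w')$ once $(g,f)$ is close enough to $(g_0,f_0)$: on the tail $\bigcup_{n>N}\tilde V_n$ the bound $\|\beta(\cdot,\cdot)\|_W\le C$ makes the $w'$-weighted difference at most $2C/N$, which is $<\varepsilon(w')$ for $N$ large; on the finite head $\bigcup_{n\le N}\tilde V_n$ one uses uniform convergence on $V_1,\dots,V_N$ together with the neighborhood $G^{(0)}_1\cap\dots\cap G^{(0)}_N$ of $g_0$. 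So $\beta(g,f)-\beta(g_0,f_0)$ lies in $\{h:\|h\|_{w'}<\varepsilon(w')\}\subset U_\varepsilon$, which proves continuity. In the holomorphic case one finally checks, by the chain rule applied to $\Phi$, $\xi$ and $f$, that $\gamma$ has a complex directional derivative and that it is a map of exactly the same shape, so its continuity follows by the same argument; holomorphy of $\gamma$ then follows from continuity of $\gamma$ and of $d\gamma$.

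\emph{Main obstacle.} The last step is the delicate one. Basic $\tau_\delta$-neighborhoods of $0$ are convex hulls of unions over \emph{all} weights, so one cannot simply cut a holomorphic cochain into a head piece and a tail piece; and the uniform control coming from local uniform continuity holds on each $V_n$ only for $g$ in its own neighborhood $G^{(0)}_n$, whereas $\bigcap_n G^{(0)}_n$ need not be a neighborhood of $g_0$. The diagonal weight $w'$ handles both difficulties at once, trading a small tail (controlled by the single weight $W$) against a finite head (controlled by finitely many $G^{(0)}_n$); this is precisely where second countability of $X$ and the word ``uniformly'' in ``locally uniformly continuous'' enter, and the verification of the commutation of the inductive limit with products by the locally compact $G$ is the other point that should be handled with care.
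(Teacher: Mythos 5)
Your proposal follows essentially the same route as the paper's proof (Lemma 5.3 plus the two-part argument for Lemma 5.2): reduction to a trivial summand $T=L\oplus L'$, a weighted bound $\|\beta_g f\|_{w'}\le\|f\|_w$ via a weight controlling $\|\alpha(g,l)\|$, the diagonal weight $w'(x)=n\,w_2(x)$ on a countable locally finite cover to convert local uniform convergence into $\tau_\delta$-convergence, and in the holomorphic case the derived action on $TL\to TX$ restricted to vertical vectors to identify $d\beta$ and conclude via part (a). The points you flag as delicate (commuting the product with the inductive limit for locally compact $G$, and reconciling the per-chart neighborhoods $G^{(0)}_n$ with the head/tail split) are exactly the ones the paper negotiates, in the latter case by first covering the compact $G$ by finitely many $G_0$ and intersecting the corresponding $U$'s.
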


By Theorem 2.3 and Lemma 2.2 we can assume that $L$ is a direct summand in a trivial bundle $T=X\times E\to X$, where $(E, \|\ \|_E)$ is a Banach space.
We denote by $\|\ \|\colon L\to [0,\infty)$ the restriction of $\|\ \|_E$ to the fibers $L_x\subset \{x\}\times E$.
According to Proposition 2.4 the topology on $\Gamma(X,L)$ is induced from the embedding $\Gamma(X,L)\subset\Gamma(X,T)$.
Concretely, this means that with weights $w\in W(X)$ if we define Banach spaces $(\Gamma(w,L),\ \|\ \|_w)$,
$$
\Gamma(w,L)=\{f\in\Gamma(X,L)\colon \|f\|_w=\sup_{x\in X} \|f(x)\|/w(x)<\infty\},
$$
then $\Gamma(X,L)=\underset{\longrightarrow}{\lim}\ \Gamma (w,L)$ as locally convex spaces.
To prove Lemma 5.2 we need

\begin{lemma}If $X$ is second countable, a compact space $G$ acts locally uniformly on $L$, and $\beta$ is the induced action on $\Gamma(X,L)$, then for every $w\in W(X)$ there is a $w'\in W(X)$ such that
\begin{equation}
G\times\Gamma(w,L)\ni (g,f)\mapsto\beta_g f\in\Gamma(w',L)
\end{equation}
is continuous with respect to the $\|\ \|_w, \|\ \|_{w'}$ topologies on $\Gamma(w,L)$, $\Gamma(w',L)$.
\end{lemma}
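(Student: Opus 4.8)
The plan is to argue directly with the concrete model for the topology on $\Gamma(X,L)$. Fix, as in the paragraph before the lemma, a trivial bundle $T=X\times E$ in which $L$ is a direct summand, let $\|\cdot\|$ on the fibres of $L$ be the restriction of $\|\cdot\|_E$, and recall $\Gamma(X,L)=\underset{\longrightarrow}{\lim}\,\Gamma(w,L)$. Since $\alpha_g$ restricts to a bounded linear map $A_g(x):=\alpha_g|_{L_{\xi_g x}}\colon L_{\xi_g x}\to L_x$, one has $(\beta_g f)(x)=A_g(x)\,f(\xi_g x)$, a composition of holomorphic maps, so $\beta_g f\in\Gamma(X,L)$. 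I would first produce a weight $w'$ for which $\|\beta_g f\|_{w'}\le\|f\|_w$ for all $g\in G$ and $f\in\Gamma(w,L)$ --- so that the map in question is well defined and $\beta_g$ has norm $\le1$ as a map $\Gamma(w,L)\to\Gamma(w',L)$, uniformly in $g$ --- and then upgrade this to joint continuity; the same $w'$ serves both ends.

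For the bound, note that $\mathcal N(y):=\sup_{g\in G}\|\alpha_g|_{L_y}\|$ is locally bounded on $X$: by the tube lemma (using compactness of $G$) applied to the compact slice $G\times\{0_y\}\subset G\times L$, continuity of $\alpha$ bounds $\alpha$ near $G\times\{0_y\}$, and linearity on fibres turns this into an operator-norm bound near $y$. Then, again by the tube lemma together with local boundedness of $w$ and of $\mathcal N$, every $x_0\in X$ has a neighborhood $U_{x_0}$ and constants $M_{x_0},C_{x_0}$ with $w(\xi_g x)\le M_{x_0}$ and $\mathcal N(\xi_g x)\le C_{x_0}$ for all $g\in G$, $x\in U_{x_0}$. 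Thus $w_0(x):=1+\sup_{g\in G}\mathcal N(\xi_g x)\,w(\xi_g x)$ is a weight, and $\|(\beta_g f)(x)\|\le\mathcal N(\xi_g x)\,\|f(\xi_g x)\|\le\|f\|_w\,w_0(x)$.

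For continuity, fix $(\bar g,\bar f)$ and split
$$\|\beta_g f-\beta_{\bar g}\bar f\|_{w'}\le\|\beta_g(f-\bar f)\|_{w'}+\|\beta_g\bar f-\beta_{\bar g}\bar f\|_{w'};$$
by the bound above (with $w'\ge w_0$) the first summand is $\le\|f-\bar f\|_w$, so everything rests on the second. Writing
$$(\beta_g\bar f-\beta_{\bar g}\bar f)(x)=A_g(x)\,(\bar f(\xi_g x)-\bar f(\xi_{\bar g}x))+(A_g(x)-A_{\bar g}(x))\,\bar f(\xi_{\bar g}x)$$
(the differences read off in a local trivialisation) and using that on a neighborhood $U_{x_0}$ the local uniform continuity of the action --- with the finite subcover of $G\times\{x_0\}$ it provides --- forces, as $g\to\bar g$, $\xi_g x\to\xi_{\bar g}x$ and $A_g(x)\to A_{\bar g}(x)$ in operator norm, both uniformly in $x\in U_{x_0}$, together with a Cauchy estimate bounding the Lipschitz constant of $\bar f$ on $U_{x_0}$ by $\|\bar f\|_w$ times a constant depending only on $w$ and the local geometry, one obtains
$$\|(\beta_g\bar f-\beta_{\bar g}\bar f)(x)\|\le\|\bar f\|_w\,\kappa_{x_0}\,\eta_{x_0}(g)\qquad(x\in U_{x_0},\ g\ \text{near}\ \bar g),$$
where $\kappa_{x_0}<\infty$ does not depend on $\bar f$ and $\eta_{x_0}(g)\to0$ as $g\to\bar g$; a cruder estimate of the same form, with $\eta_{x_0}(g)$ replaced by a constant $\le1$, holds for all $g\in G$.

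The one delicate point --- and the main obstacle --- is that $\kappa_{x_0}$ and the rate $\eta_{x_0}$ depend on $x_0$, while the norm $\|\cdot\|_{w'}$ demands an estimate uniform over all of $X$, and $\xi_g\to\xi_{\bar g}$ is genuinely not uniform on $X$. I would resolve this using that $X$, being second countable, is Lindel\"of and paracompact: choose a locally finite cover $\{U_j\}_{j\in\bN}$ of $X$ by neighborhoods of the above type, with constants $\kappa_j$ and rates $\eta_j$, and put
$$w'(x):=w_0(x)+\sup\{j\kappa_j:x\in U_j\},$$
which is again a weight, because local finiteness makes the supremum a finite maximum near each point. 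Then $w'\ge w_0$, and on $U_j$ one has $\|(\beta_g\bar f-\beta_{\bar g}\bar f)(x)\|/w'(x)\le\|\bar f\|_w\,\eta_j(g)/j$. Given $\var>0$, pick $j_0$ with $\|\bar f\|_w/j_0<\var$: for $j>j_0$ this ratio is $<\var$ for every $g$, and for the finitely many $j\le j_0$ there is a neighborhood $G_1\ni\bar g$ on which $\|\bar f\|_w\,\eta_j(g)/j<\var$. Hence $\|\beta_g\bar f-\beta_{\bar g}\bar f\|_{w'}<\var$ for $g\in G_1$, and with the first summand this establishes continuity of $\beta\colon G\times\Gamma(w,L)\to\Gamma(w',L)$ at $(\bar g,\bar f)$.
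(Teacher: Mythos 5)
Your proposal is correct and follows essentially the same route as the paper's proof: a weight built from $\sup_{g\in G}$ of the fibrewise operator norms (the paper's $w_1,w_2$, your $\mathcal N, w_0$) gives the uniform bound $\|\beta_g f\|_{w'}\le\|f\|_w$; the two-term splitting of $\beta_g\bar f-\beta_{\bar g}\bar f$ with Cauchy estimates gives local uniform convergence on neighborhoods $U$; and a countable locally finite cover with a weight growing like $n$ on $U_n$ (the paper's $w'(x)=\max\{nw_2(x):x\in U_n\}$, your $w_0+\sup\{j\kappa_j\}$) kills the tail, after which the same triangle-inequality split yields joint continuity. The only cosmetic difference is that the paper first extends the action to the trivial bundle $T=L\oplus L'$ before estimating, whereas you work directly with $\alpha_g|_{L_{\xi_gx}}$ as operators between fibres.
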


\begin{proof}The action $\alpha$ on $L$ can be extended to an action $\tilde\alpha$ on the trivial bundle $T=L\oplus L'$ by letting $\tilde\alpha_g$ send $L'_x$ to the zero vector in $L'_{\xi_g^{-1}x}$.
Hence it suffices to prove the lemma for $T$ instead of $L$, or to put it differently, we can assume $L=X\times E\to X$ is trivial.

First we claim there is a $w_1\in W(X)$ such that
\begin{equation}
\|\alpha (g,l)\|\leq w_1(x) \|l\|,\qquad g\in G,l\in L_x,x\in X.
\end{equation}
Indeed, by continuity of $\alpha$, for any $(g_0,x_0)\in G\times X$ there are a neighborhood $G_0\times U$ and an $\var > 0$ such that
$$
\|\alpha(g,l)\|\leq 1,\quad (g,l)\in G_0\times L_U,\ \|l\|\leq\var.
$$
Hence $\|\alpha(g,l)\|\leq \|l\|/\var$ for $(g,l)\in L_U$.
Since $G$ can be covered by finitely many such $G_0$, we obtain for every $x_0\in X$ a neighborhood $U\subset X$ and a positive number $c_U$ such that $\|\alpha(g,l)\|\leq c_U\|l\|$ for $l\in L_U$.
Denoting by $\fU$ a cover of $X$ by such neighborhoods, the weight $w_1(x)=\inf \{c_U\colon x\in U\in\fU\}$ will do.

Set $w_2(x)=\sup_{g\in G} w(\xi_g x) w_1 (\xi_g x)$, a locally bounded positive function.
(5.4) implies for $f\in\Gamma (w,L)$ 
\begin{equation}
\begin{gathered}
\begin{split}\|\beta_g  f(x) \|=&\|\alpha_g  (f(\xi_g x))\|\leq\\
&{\|f(\xi_g x)\|\over w(\xi_g x)} w(\xi_g x) w_1 (\xi_g x)\leq \|f\|_w w_2(x),\quad\text{ and}\end{split}\\
\|\beta_g f\|_{w'}\leq \|f\|_w,
\end{gathered}
\end{equation}
provided $w'\geq w_2$.
In particular, $\beta_g$ is continuous for fixed $g$.

Next we claim that every $x_0\in X$ has a neighborhood $U$ such that with $\overline g\in G$ and $f\in\Gamma(w,L)$
\begin{equation}
\|(\beta_g f)(x)-(\beta_{\overline g} f)(x)\|\to 0\text{ uniformly for }x\in U,\text{ as }g\to\overline g.
\end{equation}
At first we prove a weaker version when a $g_0\in G$ is also fixed, and (5.6) is claimed only for $\overline g$ in some neighborhood $G_0\subset G$ of $g_0$.

Since $L=X\times E\to X$ is trivial, we can write $f\in\Gamma(w,L)$ as $f(x)=(x,e(x))$, where $e\in{\cO}^E(X)$.
Similarly,
\begin{equation}
\alpha_g(l)=(\xi^{-1}_g x,a(g,l)),\qquad g\in G, l\in L_x,
\end{equation}
where $a\colon G\times L\to E$.
As $\alpha$ is locally uniformly continuous, we can choose neighborhoods $U_1\subset X$ of $x_0$, $V_1$ of $\xi_{g_0} x_0$, and $G_1\subset G$ of $g_0$ such that for $g\to\overline g\in G_1$
\begin{equation}\begin{gathered}
 \xi_g\to\xi_{\overline g}\ \text{ uniformly on }U_1,\\
 a(g,l)\to a(\overline g,l)\text{ uniformly for }l\in V_1\times B,
\end{gathered}\end{equation}
where $B$ is a neighborhood of $0\in E$.
By homogeneity, the same holds for any bounded set $B\subset E$.
We arrange that
\begin{equation*}
\sup_{V_1} w=s_1 < \infty,\text{ and}
\sup_{G\times V_1\times B'} \|a\|_E < \infty,
\end{equation*}
with some neighborhood $B'$ of $0\in E$.
As $a$ is linear on the fibers of $L$, the latter implies $\|a(g,l)\|_E\leq s_2 \|l\|$ for $g\in G$, $l\in L_{V_1}$, with $s_2<\infty$.
Since a neighborhood of $x_0\in X$ can be identified with a ball in some Banach space $F$, we can also arrange that $V_1$ is identified with the unit ball of $F$, the center corresponding to $\xi_{g_0}x_0$.
Let $V\subset V_1$ denote the concentric ball of radius 1/2, and choose $G_0\subset G_1$, $U\subset U_1$ so that $G_0 U\subset V$.
The operator norm of $\partial a/\partial l$ can then be estimated \`a la Cauchy:
\begin{equation}
\left\|{\partial a(g,l)\over\partial l}\right\|_{\text{op}}\leq 2s_2 (\|l\|+1),\ g\in G,\ l\in L_V.
\end{equation}

Further, if $f\in\Gamma (w,L)$ and $f(x)=(x,e(x))$, then
\begin{equation}
\sup_{V_1} \|e\|_E\leq s_1 \|f\|_w\text{ and }\sup_V \left\|{\partial e\over\partial x}\right\|_{\text{op}}\leq 2s_1\|f\|_w.
\end{equation}
With $g,\overline g\in G_0$, $x\in U$
\begin{multline*}
\|(\beta_g f)(x)-(\beta_{\overline g}f)(x)\| \leq\\
\|\alpha_g (f(\xi_g x))-\alpha_g (f(\xi_{\overline g} x))\|+\|\alpha_g (f(\xi_{\overline g} x))-\alpha_{\overline g}(f(\xi_{\overline g} x))\|,
\end{multline*}
and the first term on the right is
\begin{multline*}
\|a(g,\xi_g x, e(\xi_g x))-a(g, \xi_{\overline g} x, e(\xi_{\overline g} x))\|_E\\
\leq 2 s_2 (s_1 \|f\|_w+1) (2s_1 \|f\|_w+1) \|\xi_g x-\xi_{\overline g} x\|_F
\end{multline*}
by (5.9), (5.10).
Hence (5.8) implies this term tends to $0$ uniformly as $g\to\overline g$.
The second term, or $\|a(g,\xi_{\overline g}x, e(\xi_{\overline g} x))-a(\overline g,\xi_{\overline g} x,e(\xi_{\overline g} x))\|_E$, also tends to $0$ uniformly, by the second part of (5.8) and by (5.10).
Therefore indeed $\|\beta_g f-\beta_{\overline g} f\|\to 0$ uniformly on $U$ as $g\to\overline g\in G_0$.

Since $G$ can be covered by finitely many such $G_0$, the intersection of the corresponding $U$'s provides the neighborhood $U$ for which (5.6) holds for all $\overline g\in G$ and $f\in\Gamma (w,L)$.

Now we cover $X$ by a locally finite family of such open sets $U$.
The family must be countable, let us denote its elements $U_1,U_2,\ldots$, and define $w'\in W(X)$ by
$$
w'(x)=\max \{n w_2 (x)\colon x\in U_n\}.
$$
For $x\in U_n\cup U_{n+1}\cup\ldots$ (5.5) implies
$$
\| (\beta_g f) (x)\|/w'(x)\leq \|f\|_w/n,
$$
so that it follows from (5.6) with $U=U_1,\ldots,U_{n-1}$ that $\|\beta_g f-\beta_{\overline g} f\|_{w'}\to 0$.
Putting this and (5.5) together
$$
\|\beta_g f-\beta_{\overline g}\overline f\|_{w'}\leq \|\beta_g (f-\overline f)\|_{w'} + \|\beta_g \overline f-\beta_{\overline g}\overline f\|_{w'}\to 0
$$
as $g\to\overline g$ and $f,\overline f\in\Gamma (w,L)$, $f\to\overline f$; the induced action is indeed continuous.
\end{proof}
\begin{proof}[Proof of Lemma 5.2]
(a)\ Since continuity is a local property, we might as well assume $G$ is compact, and then continuity follows from Lemma 5.3.

(b)\ Holomorphy is also a local property, so now we can assume $G$ is an open subset in some $\bC^m$.
Cauchy estimates imply that a holomorphic action is locally uniformly continuous, hence $\beta$ is continuous.
To prove it is holomorphic as well, we need to compute its directional derivatives $d\beta(\gamma,\varphi)\in T\Gamma(X,L)$ for $\gamma\in TG$, $\varphi\in T\Gamma (X,L)$, and prove $d\beta$ is continuous.
The action $(\alpha,\xi)$ of $G$ on $L$ defines a holomorphic action $(\alpha',\xi')=(d\alpha,d\xi)$ of $TG$ on the bundle $TL\to TX$ and an action $\beta'$ on $\Gamma(TX,TL)$,
\begin{equation}
\beta'(g',f')(x')=\alpha'\big(g',f'(\xi'(g',x'))\big),
\end{equation}
where $g'\in TG$, $f'\in\Gamma (TX,TL)$, and $x'\in TX$, see (5.1), (5.2).
By part (a), $\beta'$ is continuous.

Let $V=\bigcup_{x\in X} TL_x\subset TL$ denote the subbundle of vertical vectors.
If $f\in\Gamma(X,L)$, there are natural topological isomorphisms among $\Gamma(X,L)$, $\Gamma(X,f^* V)$, and $T_f\Gamma (X,L)$.
Indeed, if $\varphi\in\Gamma (X,L)$, for every $x\in X$ the tangent vector to the curve $t\mapsto f(x)+t\varphi(x)$, at $t=0$, is in $f^* V_x$, and so $\varphi$ determines a holomorphic section of $f^* V$.
Similarly, the tangent vector to the curve $t\mapsto f+t\varphi$ is in $T_f \Gamma (X,L)$.
In what follows, we shall not distinguish between $T_f\Gamma (X,L)$ and $T(X,f^* V)$.
Similarly, $T\Gamma (X,L)$ will be identified with $\Gamma (X,V)\subset\Gamma (X, TL)$.

For fixed $x$ the map $G\times\Gamma (X,L)\ni (g,f)\mapsto (\beta_g f)(x)\in L$ is differentiable and by the chain rule its derivative in the direction $(\gamma,\varphi)\in T_g F\times T_f\Gamma (X,L)$ is
\begin{equation}
d\alpha\big(\gamma,df(d\xi(\gamma,0_x))+\varphi(\xi(g,x))\big),
\end{equation}
where $0_x\in T_x X$ stands for the zero vector.
In fact, the directional difference quotients converge locally uniformly in $x\in X$, whence the difference quotients of $\beta$ converge in the topology of $\Gamma(X,L)$.
We conclude that $d\beta(\gamma,\varphi)\in T\Gamma (X,L)\approx T(X,V)$ exists, and $d\beta(\gamma,\varphi)(x)$ is given by (5.12).
Now, if with each $\varphi\in T_f\Gamma (X,L)\approx \Gamma (X,f^* V)$ we associate $\varphi'\in\Gamma (TX,TL)$,
$$
\varphi' (x')=df (x')+\varphi(x),\qquad x'\in T_x X,
$$
then by (5.11), (5.12) we see that $d\beta(\gamma,\varphi)$ is the restriction of $\beta'(\gamma,\varphi')\in\Gamma (TX,TL)$ to the zero section $X\subset TX$.
Since both $\beta'$ and the map $\varphi\mapsto\varphi'$ are continuous, it follows that so is $d\beta$, and $\beta$ is indeed holomorphic.
\end{proof}

We are interested in actions of finite dimensional complex Lie groups $G$ on Banach bundles.
We assume that $G$ is reductive in the sense that there is a compact real subgroup $G_{\bR}\subset G$ which is maximally real and intersects each component of $G$.
Recall that a real submanifold $N\subset M$ of a complex manifold is maximally real if $T_x N\oplus i T_x N=T_x M$ for all $x\in N$.
When $\dim M<\infty$, this amounts to requiring that locally the pair $(M,N)$ is biholomorphic to $(\bC^n,\bR^n)$.
Let $\text{Irr } = \text{ Irr}_{G_{\bR}}$ denote the set of irreducible characters of $G_{\bR}$.
Any holomorphic irreducible representation $G\to\text{ GL}(k,\bC)$ restricts to an irreducible representation of $G_{\bR}$, and the representation of $G$ can be recovered from its restriction.
Hence isomorphism classes of irreducible representations of $G$ can be labeled by (certain) characters $\chi\in\text{ Irr}$.
Given an arbitrary representation of $G$ on a complex vector space $V$, the $\chi$--isotypical subspace $V_\chi\subset V$ is the linear span of the subrepresentations labeled by $\chi$.

\setcounter{thm}{+3}
\begin{thm}Suppose $X$ is a second countable Stein manifold and a complex reductive group $G\supset G_{\bR}$ acts holomorphically on a holomorphic Banach bundle $L\to X$.
Then the isotypical subspaces $\Gamma_\chi (X,L)\subset\Gamma (X,L)$ of the induced representation $\beta$ are closed for $\chi\in\text{ Irr}$, 
there are surjective, continuous, $G$--equivariant operators $P_\chi\colon\Gamma (X,L)\to\Gamma_\chi (X,L)$,
$$
P_\chi P_\psi=\begin{cases} P_\chi&\text{if $\chi=\psi$}\\
0 &\text{if $\chi\neq\psi$,}\end{cases}
$$
and $\bigoplus_\chi\Gamma_\chi (X,L)\subset\Gamma(X,L)$ is dense.
\end{thm}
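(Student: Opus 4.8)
The plan is to produce the projections $P_\chi$ by averaging over the maximal compact subgroup $G_{\bR}$ and then to transport all conclusions from $G_{\bR}$ to $G$ by holomorphic continuation. For $\chi\in\text{Irr}$ put $d=\dim\chi=\chi(e)$ and
$$
P_\chi f=d\int_{G_{\bR}}\overline{\chi(g)}\,\beta_g f\,dg ,
$$
with $dg$ the normalized Haar measure on the compact Lie group $G_{\bR}$. The first point is that this integral makes sense even though $\Gamma(X,L)$ is only sequentially complete: by Lemma 5.2(a) the restricted action of $G_{\bR}$ on $\Gamma(X,L)$ is continuous, and by Lemma 5.3, for $f\in\Gamma(w,L)$ there is a weight $w'$ such that $g\mapsto\beta_g f$ is a continuous map of $G_{\bR}$ into the Banach space $\Gamma(w',L)$; hence the integral exists there, and the bound $\|\beta_g f\|_{w'}\le C\|f\|_w$ furnished by the proof of Lemma 5.3 shows $P_\chi\colon\Gamma(w,L)\to\Gamma(w',L)$ is bounded. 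As this holds for every $w$, the universal property of the direct limit topology on $\Gamma(X,L)$ makes $P_\chi\colon\Gamma(X,L)\to\Gamma(X,L)$ continuous. The operators $E_{ij}=d\int_{G_{\bR}}\overline{\rho_{ij}(g)}\,\beta_g\,dg$, attached to a fixed unitary model $\rho=(\rho_{ij})$ of $\chi$, are handled the same way; note $\sum_i E_{ii}=P_\chi$.

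Next I would record the algebraic identities. Writing $\beta(\phi)=\int_{G_{\bR}}\phi(g)\beta_g\,dg$ for $\phi\in C(G_{\bR})$ (the same type of integral, interpreted via iterated use of Lemma 5.3 and Fubini in Banach spaces), the substitution $k=gh$ together with $\beta_{gh}=\beta_g\beta_h$ gives $\beta(\phi)\beta(\phi')=\beta(\phi*\phi')$, so $\phi\mapsto\beta(\phi)$ is an algebra homomorphism. Since $P_\chi=\beta(e_\chi)$ with $e_\chi(g)=d\,\overline{\chi(g)}$, and the $e_\chi$ satisfy $e_\chi*e_\psi=\delta_{\chi\psi}e_\chi$ (Peter--Weyl), we get $P_\chi P_\psi=\delta_{\chi\psi}P_\chi$. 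Invariance of $dg$ and the fact that $\chi$ is a class function give $\beta_h P_\chi=P_\chi\beta_h$ for $h\in G_{\bR}$; to upgrade this to $G$-equivariance I would observe that for fixed $f$ both $g\mapsto P_\chi\beta_g f$ and $g\mapsto\beta_g P_\chi f$ are holomorphic maps $G\to\Gamma(X,L)$, by Lemma 5.2(b) and continuity of $P_\chi$, that they coincide on $G_{\bR}$, and that $\Gamma(X,L)$ is Hausdorff by Proposition 2.5; since $G_{\bR}$ is maximally real and meets every component of $G$, uniqueness of holomorphic continuation forces the two maps to agree on all of $G$.

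The crux is the identification $\im P_\chi=\Gamma_\chi(X,L)$. For the inclusion $\supseteq$, if $W\subset\Gamma(X,L)$ is a $G$-submodule labeled by $\chi$, then $W$ is $G_{\bR}$-stable and, as a $G_{\bR}$-module, isomorphic to $\rho$; Schur orthogonality gives $d\int_{G_{\bR}}\overline{\chi(g)}\rho(g)\,dg=\id$, hence $P_\chi$ is the identity on $W$, so $W\subset\im P_\chi$, and taking the span $\Gamma_\chi(X,L)\subset\im P_\chi$. For $\subseteq$ I would compute, using the change of variables $k=gh$ and unitarity of $\rho$, that $\beta_h E_{ij}=\sum_m\rho_{mi}(h)E_{mj}$; consequently, for every $f$ and every $j$ the finite-dimensional subspace $W_j=\operatorname{span}\{E_{mj}f\}_m$ is $\beta(G_{\bR})$-stable and is a $G_{\bR}$-quotient of $\rho$, so $W_j$ is $0$ or isomorphic to $\rho$. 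Applying the holomorphic-continuation argument once more (to the continuous functionals vanishing on the closed, finite-dimensional $W_j$) shows $W_j$ is actually $G$-stable, hence a $G$-submodule that is $0$ or labeled by $\chi$; therefore $W_j\subset\Gamma_\chi(X,L)$, and in particular $P_\chi f=\sum_i E_{ii}f\in\Gamma_\chi(X,L)$. Thus $\im P_\chi=\Gamma_\chi(X,L)$; this gives surjectivity of $P_\chi$ onto $\Gamma_\chi(X,L)$, and since $P_\chi^2=P_\chi$ we have $\Gamma_\chi(X,L)=\Ker(\id-P_\chi)$, closed because $\id-P_\chi$ is continuous and $\Gamma(X,L)$ is Hausdorff. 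Directness of $\bigoplus_\chi\Gamma_\chi(X,L)$ follows from $P_\psi P_\chi=\delta_{\psi\chi}P_\chi$ and $P_\psi|_{\Gamma_\psi}=\id$.

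It remains to prove density, and here I would argue by duality. Let $\ell$ be a continuous linear functional on $\Gamma(X,L)$ vanishing on every $\Gamma_\chi(X,L)$. For $f\in\Gamma(X,L)$ the function $g\mapsto\ell(\beta_g f)$ is continuous on $G_{\bR}$, and since $E_{ij}f\in\Gamma_\chi(X,L)$ we get $\int_{G_{\bR}}\overline{\rho_{ij}(g)}\,\ell(\beta_g f)\,dg=0$ for the matrix coefficients of every irreducible $\rho$; by the Peter--Weyl theorem these matrix coefficients are total in $L^2(G_{\bR})$, so $g\mapsto\ell(\beta_g f)$ vanishes identically, and evaluating at $g=e$ gives $\ell(f)=0$. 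Hence $\ell=0$, and by Hahn--Banach $\bigoplus_\chi\Gamma_\chi(X,L)$ is dense in $\Gamma(X,L)$. I expect the main obstacle to be twofold: legitimizing the vector-valued averages in the only-sequentially-complete space $\Gamma(X,L)$ — which is exactly the role of Lemma 5.3 — and, more substantially, the inclusion $\im P_\chi\subset\Gamma_\chi(X,L)$, where one needs both the matrix-coefficient operators $E_{ij}$ and the holomorphic-continuation device to see that the finite-dimensional $G_{\bR}$-submodules they manufacture are in fact $G$-submodules.
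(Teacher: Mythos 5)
Your proposal is correct and follows essentially the same route as the paper: define $P_\chi$ by a Haar integral over the compact real form $G_{\bR}$ (legitimized via Lemma 5.3 / sequential completeness), use holomorphic continuation off the maximally real $G_{\bR}$ to upgrade $G_{\bR}$-equivariance and $G_{\bR}$-invariance of finite-dimensional subspaces to the corresponding $G$-statements, and then run the standard compact-group isotypical decomposition. The only difference is that you spell out the Peter--Weyl/matrix-coefficient details that the paper delegates to [BD, III.5].
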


\begin{proof}Recall that given two holomorphic $G$--representations on Hausdorff locally convex spaces $V$ and $Z$, any $G_{\bR}$ equivariant continuous operator $P\colon V\to Z$ is automatically $G$--equivariant.
Indeed, if $v\in V$ then the map $\varphi\colon G\ni g\mapsto g^{-1} Pgv\in Z$ is holomorphic, and $\varphi(g)=Pv$ if $g\in G_{\bR}$.
As $G_{\bR}$ is maximally real and intersects each component of $G$, any holomorphic function on $G$ is uniquely determined by its restriction to $G_{\bR}$, whence $\varphi\equiv Pv$, i.e.~$P$ is $G$--equivariant.
Similarly, if $Y\subset V$ is a finite dimensional (hence closed) $G_{\bR}$--invariant subspace, then it is also $G$--invariant; for if $\pi$ denotes projection $V\to V/Y$, then for $v\in Y$ the holomorphic function $g\mapsto\pi (gv)$ vanishes for $g\in G_{\bR}$, hence vanishes for all $g\in G$.
This shows that in the theorem we can forget about representations of $G$ and deal with representations of $G_{\bR}$ only.

In light of this, the theorem follows from general representation theory,
see [BD, III.5].
The operators $P_\chi$ are defined by a Haar integral:
\begin{equation}
P_\chi f=\dim\chi\int_{G_{\bR}}\chi(g^{-1})(\beta_g f) dg\in\Gamma(X,L),\qquad f\in\Gamma(X,L),
\end{equation}
which makes sense since $\Gamma(X,L)$ is sequentially complete, see Proposition 2.5.
From (5.13) one proves that $\Gamma_\chi (X,L)=P_\chi\Gamma (X,L)$ and the other claims in our Theorem as in [BD, III.5].
Theorem 5.10 there is formulated for Hilbert representations only, but the relevant parts of its proof give what we need in our Theorem 5.4.
\end{proof}

This has a consequence for the isotypical decomposition of cohomology groups.
Suppose $A,X$ are second countable complex manifolds, $\fU=\{U_a\colon a\in A\}$ an open cover of $X$, and
$$
S=\{(a,x)\in A\times X\colon x\in U_a\}\subset A\times X
$$
is a Stein open subset (necessarily second countable).
Let $L\to X$ be a holomorphic Banach bundle.
Assume a complex reductive Lie group $G\supset G_{\bR}$ acts holomorphically on the right on $A$ and on the left on $L$.
We write $(g,a)\mapsto ag$ for the action on $A$, while retain the notation $\alpha,\xi$ for the actions on $L$ and $X$.
Finally assume that the actions on $A$ and $X$ are compatible:\ $\xi_g U_a=U_{ag}$.
Then, in the notation of Section 3, the diagonal action on $A^{p+1}\times X$ restricts to a holomorphic action $\xi^p$ on $S^{[p+1]}$, and the action on $L$ can be lifted to a holomorphic group action $\alpha^p$ on $\pi_p^* L$, $p=0,1,\ldots$.
The upshot is that there is an induced holomorphic representation $\beta^p$ on $\Gamma(S^{[p+1]},\pi_p^* L)\approx C^p_{\hol} (\fU,L)$.
Clearly, \v Cech coboundary is $G$--equivariant.

\setcounter{thm}{+4}
\begin{thm}With notation and assumptions as above, the isotypical subspaces $V_\chi^p\subset C^p_{\hol} (\fU,L)$ are closed for $\chi\in\text{ Irr}$, there are surjective, continuous, $G$--equivariant operators $P_\chi^p\colon C_{\hol}^p (\fU,L)\to V_\chi^p$,
$$
P_\chi^p P_\psi^p=\begin{cases} P_\chi^p &\text{if $\chi=\psi$}\\
0 &\text{if $\chi\neq\psi$, }\end{cases}
$$
and $\bigoplus_\chi V_\chi^p\subset C^p (X,L)$ is dense.
The same holds if the space $C^p_{\hol} (\fU,L)$ of cochains is replaced by the subspace of cocyles or by the cohomology groups $H^p(X,L)$.
\end{thm}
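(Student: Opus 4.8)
The plan is to install the isotypical decomposition on the cochain spaces $C^p_{\hol}(\fU,L)$, transfer it to the cocycles, and then push it down to cohomology. For the cochains, observe that since $S$ is a Stein open subset of $A\times X$, each $S^{[p+1]}$ is a second countable Stein manifold and $\pi_p^*L\to S^{[p+1]}$ is a holomorphic Banach bundle on which $(\alpha^p,\xi^p)$ is a holomorphic action of the reductive group $G\supset G_{\bR}$. Theorem 5.4, applied to this bundle, yields at once the closed isotypical subspaces $V^p_\chi\subset C^p_{\hol}(\fU,L)\approx\Gamma(S^{[p+1]},\pi_p^*L)$, the continuous surjective $G$-equivariant projections $P^p_\chi$ with the stated orthogonality relations, and the density of $\bigoplus_\chi V^p_\chi$. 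I would also record that, being the Haar integral $\dim\chi\int_{G_{\bR}}\chi(g^{-1})\beta^p_g\,dg$, each $P^p_\chi$ commutes with every continuous $G$-equivariant operator; in particular, since $\xi_g U_a=U_{ag}$ makes \v Cech coboundary $\delta$ equivariant, the $P^\bullet_\chi$ commute with $\delta$.

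Next, the cocycles. The space $Z^p=\Ker(\delta\colon C^p_{\hol}(\fU,L)\to C^{p+1}_{\hol}(\fU,L))$ is closed, since $\delta$ is continuous and its target is Hausdorff by Proposition 2.5, and it is $G$-invariant; hence $Z^p$ is a Hausdorff, sequentially complete locally convex space carrying the holomorphic $G$-representation $\beta^p|_{Z^p}$, and $P^p_\chi$ maps it into itself because $P^\bullet_\chi$ commutes with $\delta$. The proof of Theorem 5.4 used nothing about $\Gamma(X,L)$ beyond exactly these features --- sequential completeness (so the Haar integral converges), Hausdorffness, holomorphy of the action --- together with the representation theory of [BD, III.5]; so the same argument applies verbatim to $Z^p$, giving the conclusion of the theorem with $Z^p$ in place of $C^p_{\hol}(\fU,L)$ and $P^p_\chi Z^p=V^p_\chi\cap Z^p$ in place of $V^p_\chi$.

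For the cohomology, the coboundary $B^p=\im\delta^{p-1}\subset Z^p$ is $G$-invariant and, because $P^\bullet_\chi$ commutes with $\delta$, also $P^p_\chi$-invariant for every $\chi$; hence each $P^p_\chi$ descends to a continuous $G$-equivariant idempotent $\tilde P^p_\chi$ on $H^p_{\hol}(\fU,L)=Z^p/B^p$, which by Theorem 3.1 and the identification $\check H^\bullet=H^\bullet$ of Section 2 is topologically isomorphic to $H^p(X,L)$, and the $\tilde P^p_\chi$ still obey the orthogonality relations. If $H^p(X,L)$ were already known to be Hausdorff --- equivalently, $B^p$ closed in $Z^p$ --- everything would follow: $\tilde P^p_\chi$ would be a continuous idempotent on a Hausdorff space, so its image $V^p_\chi(H^p)$ would be closed, $\tilde P^p_\chi$ would be a continuous surjective $G$-equivariant operator onto it, and $\bigoplus_\chi V^p_\chi(H^p)=q(\bigoplus_\chi P^p_\chi Z^p)$ would be dense, being the continuous image under the quotient map $q\colon Z^p\to H^p_{\hol}(\fU,L)$ of the dense subspace $\bigoplus_\chi P^p_\chi Z^p$ of $Z^p$, the sum remaining direct because applying the $\tilde P^p_\chi$ separates summands.

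The obstacle is thus the closedness of the $V^p_\chi(H^p)$ in the possibly non-Hausdorff $H^p(X,L)$, and this is in fact equivalent to $H^p(X,L)$ being Hausdorff: closedness of $V^p_\chi(H^p)$ forces $\overline{\{0\}}\subset V^p_\chi(H^p)$, and since distinct $V^p_\chi(H^p)$ meet only in $0$, closedness for all $\chi$ forces $\overline{\{0\}}=0$; conversely Hausdorffness gives everything, as just noted. Using $H^p(X,L)\cong Z^p/B^p$ and the isotypical decomposition of $B^p$, Hausdorffness comes down to $\delta(V^{p-1}_\chi)$ being closed in $V^p_\chi\cap Z^p$ for each $\chi$, i.e.\ to the isotypical subcomplex $V^\bullet_\chi\subset C^\bullet_{\hol}(\fU,L)$ having Hausdorff cohomology. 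Writing $V^p_\chi\approx W_\chi\otimes M^p_\chi$ with $W_\chi$ the irreducible $G$-module and $M^p_\chi=\Hom_G(W_\chi,\Gamma(S^{[p+1]},\pi_p^*L))\approx\Gamma(S^{[p+1]},\Hom(W_\chi,\pi_p^*L))^G$ the multiplicity space --- on which $\delta$ acts through $M^{p-1}_\chi\to M^p_\chi$ --- the claim becomes that the multiplicity complex $M^\bullet_\chi$ has closed coboundaries. I expect to obtain this from the theory of reductive group actions on Stein manifolds: $S^{[p+1]}$ being Stein, the $G$-action has an analytic Hilbert quotient $q_p\colon S^{[p+1]}\to Y_p$ with $Y_p$ Stein, and $M^p_\chi$ should be identified with the sections of a cohesive sheaf over $Y_p$, after which the vanishing and isomorphism theorems of Section 2 over the Stein $Y_p$ --- or, lacking a clean quotient, a rerun of the double-complex argument of Section 4 carried out on the multiplicity spaces (where averaging over $G_{\bR}$ makes the right inverses of Proposition 4.1 respect the isotypical splitting) --- should produce the closed-range statement. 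Making this identification precise, verifying the quotient is Stein and the direct-image sheaf cohesive, and matching up the coboundary maps, is the step I expect to be hardest.
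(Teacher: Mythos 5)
Your first two paragraphs reproduce the paper's proof exactly: the cochain statement is, as you say, Theorem 5.4 applied to the Stein manifold $S^{[p+1]}$ and the bundle $\pi_p^*L$, and the passage to cocycles and to cohomology rests on the single identity $\delta^pP^p_\chi=P^{p+1}_\chi\delta^p$ together with the topological isomorphism $H^p_{\hol}(\fU,L)\approx H^p(X,L)$ from Theorem 3.1 and [L, Theorem 4.5]. That is the entire content of the paper's argument, and up to that point your write-up is correct and, if anything, more detailed than the original (e.g.\ in checking that $Z^p$ inherits Hausdorffness, sequential completeness and the holomorphic action, and that density and directness survive the quotient map).

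Where you part company with the paper is the last third of your proposal. You correctly observe that, read literally, closedness of \emph{all} isotypical subspaces of $H^p(X,L)$ would force $\overline{\{0\}}=0$, i.e.\ Hausdorffness of $H^p(X,L)$ --- something the introduction to Section 5 explicitly declines to guarantee. This is a sharp remark about the \emph{statement}, but the paper's proof simply descends the projections and does not engage with it; there is no Hausdorffness argument anywhere in the paper for you to match. Your proposed repair --- an analytic Hilbert quotient $S^{[p+1]}\to Y_p$, identification of the multiplicity spaces with sections of a cohesive direct-image sheaf on a Stein quotient, and a closed-range theorem for the multiplicity complex --- is entirely your own, and it is not a proof: every step you list (existence and Steinness of the quotient, cohesiveness of the direct image, matching of coboundaries, closed range) is left unverified, and you say so yourself. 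So as a reconstruction of the paper's proof the proposal should stop after descending the projections; as an attempt to establish closedness in the non-Hausdorff case it is an open-ended program, not an argument, and you should either flag the closedness claim on $H^p(X,L)$ as requiring the additional hypothesis that $H^p(X,L)$ be Hausdorff, or isolate your quotient construction as a separate conjecture rather than folding it into the proof.
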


\begin{proof}The first part is an immediate consequence of Theorem 5.4.
The second part follows because the projections
$$
P_\chi^p=\dim\chi\int_{G_{\bR}}\chi(g^{-1})\beta_g^p dg,
$$
cf.~(5.14), respect \v Cech coboundary:\ $\delta^p P_\chi^p=P_\chi^{p+1}\delta^p$, and because $H_{\hol}^p (\fU,L)\approx H^p (X,L)$, cf.~Theorem 3.1 and [L, Theorem 4.5].
\end{proof}

\end{document}